\newcommand{\equ}[1]{(\ref{#1})}
\newcommand{\R}{\mathbb{R}}
\def\theequation{\thesection.\@arabic\c@equation}
\renewcommand{\theequation}{\thesection.\arabic{equation}}
\newtheorem{lemma}{Lemma}[section]
\newtheorem{proposition}{Proposition}[section]
\newtheorem{corollary}{Corollary}[section]
\newtheorem{remark}{Remark}[section]
\newtheorem{theorem}{Theorem}[section]
\newcommand{\ve} {\varepsilon}
\newcommand{\be}{\begin{equation}}
\newcommand{\ben}{\begin{equation*}}
\newcommand{\ee}{\end{equation}}
\newcommand{\een}{\end{equation*}}
\newcommand{\BL}{\begin{lemma}}
\newcommand{\EL}{\end{lemma}}
\newcommand{\BT}{\begin{theorem}}
\newcommand{\ET}{\end{theorem}}
\newcommand{\BP}{\begin{proposition}}
\newcommand{\EP}{\end{proposition}}
\newcommand{\BC}{\begin{corollary}}
\newcommand{\EC}{\end{corollary}}
\def\bs{\begin{split}}
\def\es{\end{split}}
\begin{document}
\title[Interior spike solutions]{An optimal bound on the number of interior spike solutions for the Lin-Ni-Takagi problem }

\author{Weiwei Ao}
\address{Department of Mathematics, Chinese University of Hong Kong, Shatin, Hong Kong.  {\sl wwao@math.cuhk.edu.hk}}
\author{ Juncheng Wei}
\address{Department of Mathematics, Chinese University of Hong Kong, Shatin, Hong Kong,and Department of Mathematics, University of British Columbia, Vancouver, B.C., Canada, V6T 1Z2.
{\sl wei@math.cuhk.edu.hk}}

\author{Jing Zeng}
\address{Department of Mathematics, Fujian Normal University, Fuzhou, Fujian, China}

\begin{abstract}
We consider the following singularly perturbed Neumann problem
\begin{eqnarray*}
\ve^2 \Delta u -u +u^p = 0 \quad {\mbox {in}} \quad \Omega, \quad u>0 \quad {\mbox {in}} \quad \Omega, \quad
{\partial u \over \partial \nu}=0 \quad {\mbox {on}} \quad \partial \Omega,
\end{eqnarray*}
where $p$ is subcritical and $\Omega$ is a smooth and bounded domain in $\R^n$ with its unit outward normal $\nu$. Lin-Ni-Wei \cite{LNW} proved  that there exists $\ve_0$ such that for $0<\ve<\ve_0$ and for each integer $k$ bounded by
\begin{equation}
1\leq k\leq \frac{\delta(\Omega,n,p)}{ (\ve |\log \ve |)^n}
\end{equation}
where $\delta(\Omega,n,p)$ is a constant depending only on $\Omega$, $p$ and $n$, there exists a solution with $k$ interior spikes. We show that the bound on $k$ can be improved to
\begin{equation}
1\leq k\leq \frac{\delta(\Omega,n,p)}{ \ve^n},
\end{equation}
which is optimal.
\end{abstract}

\keywords{Singular Perturbation, localized energy method, optimal bound}

\subjclass{ 35J25, 35J20, 35B33, 35B40}

\date{}\maketitle

\setcounter{equation}{0}
\section{Introduction and statement of main results}
Of concern is the following Lin-Ni-Takagi problem (\cite{LNT})
\begin{equation}\label{p}
\left\{\begin{array}{c}
\ve^2\Delta u-u+u^p=0 \mbox{ in } \Omega\\
u>0 \mbox{ in } \Omega\\
\frac{\partial u}{\partial \nu}=0  \mbox{ on }\partial \Omega,
\end{array}
\right.
\end{equation}
where $p$ satisfies $1<p<+\infty$ for $n=2$ and $1<p<\frac{n+2}{n-2}$ for $n\geq 3$ and $\Omega$ is bounded, smooth domain in $\R^n$ with its unit outward normal $\nu$.

\medskip

Problem (\ref{p}) arises in many applied models concerning biological pattern formations. For instance, it gives rise to steady states in the Keller-Segel model of the chemotactic aggregation of the cellular slime molds and it also plays an important role in the Gierer-Meinhardt model describing the regeneration phenomena of hydra. See \cite{GM}, \cite{KS} and \cite{LNT} for more details.

\medskip

Problem (\ref{p}) has been studied extensively for the last twenty years. In the pioneering  paper \cite{LNT}, Lin, Ni and Takagi proved the a priori estimates and existence of least energy solutions to (\ref{p}), that is,  a solution $u_\epsilon$ with minimal energy. Furthermore, Ni and Takagi showed in  \cite{NT,NT1} that, {\it for each $\epsilon >0$ sufficiently small, $u_\epsilon$ has a spike at the most curved part of the boundary, i.e., the region where the mean curvature attains maximum value}.

\medskip

Since the publication of \cite{NT1}, problem (\ref{p}) has received  a great deal of attention and significant progress has been made. More specifically, solutions with multiple boundary peaks  as well as  multiple interior peaks have been established. (See \cite{DFW}-\cite{DFW1}, \cite{GPW}-\cite{GWW}, \cite{L}-\cite{LNW}, \cite{NW}-\cite{WW1} and the references therein.) In particular, it was established in Gui and Wei \cite{GW1} that {\it for any two given integers $k\geq 0, l\geq 0$ and $ k+l >0$, problem (\ref{p}) has a solution with exactly $k$ interior spikes and $l$ boundary spikes} for every $\epsilon$ sufficiently small. Furthermore, Lin, Ni and Wei \cite{LNW} showed that there are at least $ \frac{\delta(n,p,\Omega)}{ (\epsilon |\log \epsilon|)^n}$ number of interior spikes.
On the other hand, problem (\ref{p}) also admits higher dimensional concentrations. (See  \cite{N1}.)   For results in this direction, we refer to \cite{AMN}, \cite{M1}-\cite{MM1}. In particular, we mention the results of Malchiodi and Montenegro \cite{MM,MM1} on the existence of solutions concentrating on the {\em whole boundary} provided that the sequence $\ve$ satisfies some gap condition.

\medskip

In this paper, we shall address the question of the maximal possible number of spikes, in terms of small parameter $\ve>0$, that a solution of (\ref{p}) could have. Note that since $p$ is subcritical, the solutions to (\ref{p}) is uniformly bounded (Lin-Ni-Takagi \cite{LNT}). Thus the energy bound for solutions of (\ref{p}) is $O(1)$. On the other  hand, each spike contributes to at least $O(\ve^n)$ energy. This implies that the number of interior spikes can not exceed $ O(\ve^{-n})$.  Our main result, Theorem \ref{theo1} below, asserts that for every positive integer $k\leq \frac{\delta_{\Omega,n,p}}{\ve^n}$, where $\delta (\Omega,n,p)$ is a  constant depending only on $n,p$ and $\Omega$, problem (\ref{p}) has a solution with exactly $k$ peaks. This gives an optimal bound on the number of interior spikes.

\medskip

Our proof uses a {\em ``localized energy method''}  as in \cite{GW} and \cite{LNW}. There are two main difficulties. First, the distance between spikes is assumed only to be $O(\ve)$. In the Liapunov-Schmidt reduction process, we have to prove that all the estimates are uniform with respect to the integer $k$. Second, we have to detect the difference in the energy when spikes move to the boundary of the configuration space. A crucial estimate is Lemma 5.1, in which we prove that the accumulated error can be controlled from step $k$ to  step $ k+1$. To prove Lemma 5.1, we have to perform a secondary Liapunov-Schmidt reduction. This seems to be new.

We now state the main result in this paper.

\begin{theorem}\label{theo1}
There exists an $\ve_0>0$ such that for $0<\ve<\ve_0$, and any positive integer $k$ satisfying
\begin{equation}
1\leq k\leq \frac{\delta (\Omega,n,p) }{\ve^n},
\end{equation}
where $\delta (\Omega,n,p)$ is a constant depending on $n,\Omega$ and $p$ only, problem (\ref{p}) has a solution $u_\ve$ that possesses exactly $k$ local maximum points.
\end{theorem}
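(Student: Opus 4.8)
The plan is to construct the $k$-spike solution by the localized energy method, placing spikes at points $Q = (Q_1,\dots,Q_k)$ in a configuration space that allows the inter-spike distance and the distance to $\partial\Omega$ to be as small as $O(\ve)$ (but bounded below by $\eta\ve$ for a small fixed $\eta>0$), and scaling so that after the change of variables $x \mapsto x/\ve$ the picture becomes $k$ copies of the standard ground state $w$ of $\Delta w - w + w^p = 0$ on $\R^n$, glued together with cutoffs adapted to $\Omega$. First I would set up the approximate solution $W_{Q} = \sum_{j=1}^k w_{\ve,Q_j}$ (suitably projected to satisfy the Neumann condition), compute the energy $J_\ve(W_Q)$ and verify that it has the expansion $J_\ve(W_Q) = k\,\ve^n\big(I(w) + (\text{interaction and boundary terms})\big)$, where the correction terms are exponentially small in the scaled distances. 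The crucial point, and where the uniformity in $k$ must be watched at every line, is that all the constants in the contraction-mapping / invertibility estimates must be independent of $k$; this works because each spike only interacts appreciably with $O(1)$ neighbors, so the relevant linear operator is, uniformly, a small perturbation of $k$ decoupled copies of the nondegenerate linearized operator $L_0 = \Delta - 1 + p w^{p-1}$ acting on the subspace orthogonal to the translations.

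Next I would carry out the Liapunov–Schmidt reduction: solve the auxiliary equation for the error $\phi = \phi_Q$ orthogonal to the approximate kernel $\mathrm{span}\{\partial w_{\ve,Q_j}/\partial Q_{j,i}\}$, obtaining $\|\phi_Q\|_* \le C \ve^n (\text{small})$ with $C$ independent of $k$, and then reduce the problem to finding a critical point of the finite-dimensional functional $\mathcal{F}_\ve(Q) = J_\ve(W_Q + \phi_Q)$ over the configuration space $\Lambda_k$. The existence of such a critical point would be obtained by a maximization (or min-max) argument: one maximizes $\mathcal{F}_\ve$ over the closure of $\Lambda_k$ and must show the maximizer lies in the interior, i.e. no spike drifts to the boundary of $\Lambda_k$ (two spikes colliding, or a spike hitting $\partial\Omega$). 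This is precisely where Lemma 5.1 of the excerpt enters: it quantifies how the reduced energy changes as one passes from an admissible $k$-spike configuration to a $(k+1)$-spike configuration, i.e. it controls the accumulated error inductively and shows the energy strictly prefers the interior configuration; I would invoke it as a black box, together with the energy expansion, to rule out boundary concentration of the maximizer.

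The main obstacle I expect is exactly the one the authors flag: obtaining estimates that are uniform in $k$ while allowing spike separation only $O(\ve)$. In the standard treatments (e.g. \cite{GW1}) the spikes are far apart and interactions are genuinely negligible; here the interaction $w_{\ve,Q_i}w_{\ve,Q_j}$ between neighboring spikes is only $e^{-O(1/\eta)}$ small, not $o(1)$ as $\ve \to 0$, so the smallness must be extracted from the fixed geometric parameter $\eta$ rather than from $\ve$, and one must be careful that summing $k \sim \ve^{-n}$ such terms (times the $\ve^n$ per-spike energy) does not destroy the needed strict inequalities. The secondary Liapunov–Schmidt reduction alluded to in the introduction is, I believe, the device for isolating the effect of moving a single spike against the frozen background of the other $k-1$ spikes, so that the delicate energy-difference estimate of Lemma 5.1 can be proved cleanly; managing the bookkeeping of these two nested reductions, with all constants tracked to be $k$-independent, is the technical heart of the argument. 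Once the reduced functional is shown to have an interior critical point $Q_\ve \in \Lambda_k$, the corresponding $u_\ve = W_{Q_\ve} + \phi_{Q_\ve}$ is a genuine solution of \eqref{p}, and by the exponential decay of $w$ and the smallness of $\phi$ each spike region contributes exactly one local maximum, giving the desired $k$ local maximum points; positivity of $u_\ve$ follows from the maximum principle as usual.
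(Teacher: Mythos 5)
Your proposal follows the paper's strategy essentially step for step: the localized energy method with a Lyapunov--Schmidt reduction whose constants are tracked to be uniform in $k$, a maximization of the reduced energy $\mathcal{M}_\ve(\mathbf{Q})=J_\ve(w_{\ve,\mathbf{Q}}+\phi_{\ve,\mathbf{Q}})$ over the configuration set $\Lambda_k$, Lemma 5.1 (via a secondary Lyapunov--Schmidt decomposition of $\varphi_{k+1}$) to control the energy jump from step $k$ to step $k+1$, and the comparison in Proposition 6.2 to show the maximizer is interior so that the Lagrange multipliers $c_{ij}$ vanish. One small caution: in the paper the minimal separation in $\Lambda_k$ is $\rho\ve$ with $\rho$ a \emph{large} constant (so neighboring-spike interactions are of size $e^{-\rho}$, small because $\rho$ is large), whereas you write the separation as $\eta\ve$ with $\eta$ \emph{small} and the interaction as $e^{-O(1/\eta)}$; after rescaling the separation becomes the parameter itself, so interactions are $e^{-O(\rho)}$ and you need the parameter large, not small -- otherwise the interaction is $O(1)$ and the contraction and coercivity estimates break.
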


\noindent
\begin{remark}
 {\em As mentioned earlier,  the upper bound for $k$ is the best possible. As far as we know, the only result on the optimal upper bound for the number of spikes  is the one-dimensional situation. In a series of papers \cite{FMT2}-\cite{FMT3}, Felmer-Martinez-Tanaka studied the following singularly perturbed nonlinear Schr\"odinger equation
\begin{equation}
\label{1d}
\ve^2 \Delta u -V(x) u+ u^p=0, u >0, u \in H^1 (\R).
\end{equation}
They constructed solutions to (\ref{1d}) with $\frac{C}{\ve}$ number of spikes. Extension to Gierer-Meinhardt system can be found in \cite{FMT1}.
 Related construction can also be found in del Pino-Felmer-Tanaka \cite{DFT}.
}

\end{remark}

\noindent
 \begin{remark}
 {\em  An interesting problem is to study the {\em homogenization} of the  measure $ \ve^{-n} |\nabla u|^2 dx$.  We expect that it will approach some kind of Lebesgue measure.   As $\ve \to 0$, the locations of the maximum points should approach to some sphere-packing positions. }
\end{remark}

\noindent
\begin{remark}
{\em It is clear that the proofs of Theorem \ref{theo1} can be applied to a large class of singularly perturbed problems
\begin{equation}
\left\{\begin{array}{l}
\epsilon^2 \Delta u- u+ f(u)=0 \ \mbox{in} \ \Omega \\
 u>0 \ \mbox{in} \ \Omega, \ \frac{\partial u}{\partial \nu} =0 \ \mbox{on} \ \partial \Omega,
\end{array}
\right.
\end{equation}
where $ f(u)$ satisfies the conditions (f1)-(f3) stated in \cite{LNW}.
}
\end{remark}

\medskip

The paper is organized as follows. Notations, preliminaries and some useful estimates are explained in Section 2. Section 3 contains the study of a linear problem that is the first step in the Lyapunov-Schmidt reduction process. In Section 4, we solve a nonlinear projected problem. Section 5 contains a key estimate which majors the differences between $k$-th step and $ (k+1)$-th step.  We then set up  a maximization problem in Section 6. Finally in Section 7, we show that the solution to the maximization problem is indeed a solution of (\ref{p}) and prove Theorem \ref{theo1}.

\medskip

Throughout this paper, unless otherwise stated, the letters $c, C$ will always denote various generic constants that are independent of $\ve$ and $k$ for $\ve$ small enough.

\bigskip

\noindent
{\bf Acknowledgment.}  Juncheng Wei was supported by a GRF grant
from RGC of Hong Kong.

\section{Notation and Some Preliminary Analysis}

In this section we introduce some notations and some preliminary analysis on approximate solutions. Our main concern is that all the estimates should be independent of $k$-the number of spikes.

Without loss of generality, we may assume that $0\in \Omega$. By the following rescaling:
\begin{equation*}
z=\ve x, \ x\in \Omega_\ve:=\{\ve z \in \Omega\},
\end{equation*}
equation (\ref{p}) becomes
\begin{equation}\label{pp}
\left\{\begin{array}{c}
\Delta u-u+u^p=0 \ \mbox{ in }\Omega_\ve\\
u>0 \mbox{ in }\Omega_\ve, \ \ \frac{\partial u}{\partial \nu}=0 \  \mbox{ on }\partial \Omega_\ve.
\end{array}
\right.
\end{equation}
For $u\in H^2 (\Omega_\ve)$, we also put
\begin{equation}
S_\ve(u)=\Delta u-u+u^p.
\end{equation}
Associated with problem (\ref{pp}) is the energy functional
\begin{equation}\label{ef}
J_\ve(u)=\frac{1}{2}\int_{\Omega_\ve}(|\nabla u|^2+u^2)-\frac{1}{p+1}\int_{\Omega_\ve}u_{+}^{p+1}, \ \ u\in H^1(\Omega_\ve),
\end{equation}
where we denote  $u_{+}=\max (u, 0)$.

Now we define the configuration space,
\begin{equation}
\Lambda_k:= \Big\{(Q_1,\cdots,Q_k)\in \Omega^k \ \Big| \ \min_{i\neq j}|Q_i-Q_j|\geq \rho\ve, \min_{i,j, d(Q_j, \partial \Omega) \leq 10 \epsilon |\ln \epsilon |  } |Q_i-Q_j^*|\geq \rho\ve\Big\},
\end{equation}
where $Q_j^*=Q_j+2d(Q_j,\partial \Omega)\nu_{\bar{Q}_j}$, $\nu_{\bar{Q}_j}$ denotes the unit outer normal at $\bar{Q}_j \in \partial \Omega$, and $\bar{Q}_j$ is the unique point on $\partial \Omega$ such that $d(Q_j,\partial \Omega)=d(Q_j,\bar{Q}_j)$, and $\rho$ is a constant which is large enough (but independent of $\ve$).  (This is possible since $ d(Q_j, \partial \Omega) \leq 10 \epsilon | \ln \epsilon |$.)

By the definition above, we may assume that
\begin{equation}
1\leq k\leq \frac{\delta}{\ve^n\rho^n}
\end{equation}
for some $\delta>0$ sufficiently small only depend on $\Omega$ ,$n$ and $p$. We can get a lower bound of $\rho$, so we have a upper bound of $k$ which is of $O(\frac{1}{\ve^n})$. See Remark \ref{remark501} below.

Let $w$ be the unique solution of
\begin{equation}
\left\{\begin{array}{c}
\Delta w-w+w^p=0, \ w>0 \mbox{ in }\R^n,\\
 w(0)=\max_{y\in \R^n}w(y), \ w\to 0 \mbox{ as } |y|\to \infty.
 \end{array}
 \right.
\end{equation}

By the well-known result of Gidas, Ni and Nirenberg \cite{GNN}, $w$ is radially symmetric and is strictly decreasing, and $w'(r)<0$ for $r>0$. Moreover, we have the following asymptotic behavior of $w$:
\begin{equation}\label{e201}
\left\{\begin{array}{c}
w(r)=A_nr^{-\frac{n-1}{2}}e^{-r}(1+O(\frac{1}{r}))\\
w'(r)=-A_nr^{-\frac{n-1}{2}}e^{-r}(1+O(\frac{1}{r}))
\end{array}
\right.
\end{equation}
for $r>0$ large, where $A_n$ is a positive constant.

Let $K(r)$ be the fundamental solution of $-\Delta+1$ centered at $0$. Then we have
\begin{equation}\label{e202}
\left\{\begin{array}{c}
w(r)=(A_0+O(\frac{1}{r}))K(r)\\
w'(r)=-(A_0+O(\frac{1}{r}))K(r)
\end{array}
\right.
\end{equation}
for $r>0$ large, where $A_0$ is a positive constant.

For $Q\in \Omega$, we define $w_{\ve,Q}$ to be the unique solution of
\begin{equation}
\Delta v-v+w(\cdot-\frac{Q}{\ve})^p=0 \ \mbox{ in } \Omega_\ve, \ \frac{\partial v}{\partial \nu}=0 \ \mbox{ on }\partial \Omega_\ve.
\end{equation}

We first analyze $w_{\ve,Q}$. To this end, set
\begin{equation}
\varphi_{\ve,Q}=w(\frac{z-Q}{\ve})-w_{\ve,Q}(\frac{z}{\ve}).
\end{equation}

We state the following  lemma on the properties of $\varphi_{\ve,Q}$:
\begin{lemma}
Assume that $c\ve\leq d(Q,\partial \Omega)\leq 10 \ve|\ln\ve|$,
where $c\geq \frac{\rho}{2}$. We have
\begin{equation}
\varphi_{\ve,Q}=-(A_0+O(\frac{1}{\rho^{\frac{1}{2}}}))K(\frac{z-Q^*}{\ve})+O(e^{-2\rho}).
\end{equation}
\end{lemma}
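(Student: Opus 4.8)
The function $\varphi_{\ve,Q}$ satisfies a linear equation obtained by subtracting the equation for $w_{\ve,Q}$ from the (rescaled) equation for $w(\cdot - Q/\ve)$; since $w(\cdot-Q/\ve)$ does not satisfy the Neumann condition on $\partial\Omega_\ve$, the defect is carried entirely by the boundary data. Concretely, $\varphi_{\ve,Q}$ solves $\Delta\varphi_{\ve,Q}-\varphi_{\ve,Q}=0$ in $\Omega_\ve$ with $\frac{\partial \varphi_{\ve,Q}}{\partial\nu} = \frac{\partial}{\partial\nu}w(\tfrac{z-Q}{\ve})$ on $\partial\Omega_\ve$. The plan is to construct an explicit approximation of $\varphi_{\ve,Q}$ using the fundamental solution $K$ and the reflected point $Q^*$, and then estimate the remainder. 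First I would use the asymptotics \equ{e202}, which give $w(r) = (A_0 + O(1/r))K(r)$, to replace $w(\tfrac{z-Q}{\ve})$ near the boundary by $(A_0+O(1/\rho^{1/2}))K(\tfrac{z-Q}{\ve})$, the error factor being $O(1/\rho^{1/2})$ because the relevant distances are $\gtrsim \rho$ in rescaled variables (here $d(Q,\partial\Omega)\geq c\ve$ with $c \geq \rho/2$).

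The heart of the matter is the reflection argument. Since $K(\tfrac{z-Q^*}{\ve})$ also satisfies $\Delta - 1 = 0$ in a neighborhood of $\partial\Omega_\ve$ (its singularity sits at $Q^*/\ve$, outside $\Omega_\ve$), and since $Q^*$ is the reflection of $Q$ across the nearest boundary point $\bar Q$, on the flat approximation of $\partial\Omega$ at $\bar Q$ one has $|z - Q| = |z - Q^*|$ and the normal derivatives of $K(\tfrac{z-Q}{\ve})$ and $K(\tfrac{z-Q^*}{\ve})$ agree. Hence $-(A_0 + O(1/\rho^{1/2}))K(\tfrac{z-Q^*}{\ve})$ has, to leading order, the same Neumann data as $\varphi_{\ve,Q}$ on $\partial\Omega_\ve$. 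The discrepancy comes from the curvature of $\partial\Omega$: on the rescaled domain the boundary has curvature $O(\ve)$, and since the boundary interactions are localized within distance $O(\rho)$ of $\bar Q/\ve$ where $K$ and its derivatives are of size $e^{-\Theta(\rho)}$ (because $d(Q,\partial\Omega^\ast)$-type distances are $\geq 2c \geq \rho$), this curvature-induced error, together with the exponentially small tails from points of $\partial\Omega_\ve$ far from $\bar Q/\ve$, is of order $O(e^{-2\rho})$. I would make this precise by writing $\psi := \varphi_{\ve,Q} + (A_0 + O(1/\rho^{1/2}))K(\tfrac{z-Q^*}{\ve})$, noting $\psi$ solves $\Delta\psi - \psi = 0$ in $\Omega_\ve$ with Neumann data bounded by $O(e^{-2\rho})$ on $\partial\Omega_\ve$ (supported essentially near $\bar Q/\ve$), and then invoking a standard maximum-principle / barrier estimate for $-\Delta + 1$ on $\Omega_\ve$ to conclude $\|\psi\|_{L^\infty(\Omega_\ve)} = O(e^{-2\rho})$; a comparison function of the form $Ce^{-2\rho}(1 + \text{small})$ or a layer-potential bound works, using that the Neumann-to-solution map for $-\Delta+1$ is bounded uniformly in $\ve$ and $k$.

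The main obstacle is twofold. First, one must handle the curvature of $\partial\Omega$ carefully: the reflection $Q\mapsto Q^*$ is exact only for a half-space, so quantifying that the mismatch in Neumann data is genuinely $O(e^{-2\rho})$ — not merely $O(\ve)$ times something — requires using that the boundary defect $\frac{\partial}{\partial\nu}w(\tfrac{z-Q}{\ve})$ is concentrated where $|z-Q|\gtrsim \rho$, so that both the defect and the curvature correction are already exponentially small, and that integrating these small contributions against the (uniformly bounded) Neumann kernel does not destroy the bound. Second, all estimates — in particular the bound on the Neumann-to-solution operator for $-\Delta + 1$ on $\Omega_\ve$, and the barrier construction — must be uniform in $\ve$ and in the number of spikes $k$; this is the recurring theme flagged in the introduction, and here it is relatively mild since $\varphi_{\ve,Q}$ depends on a single point $Q$, but one still needs the constants in \equ{e201}–\equ{e202} and in the elliptic estimates to be genuinely independent of $\ve$ as $\Omega_\ve$ exhausts $\R^n$. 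Once these two points are settled, combining the replacement of $w$ by $A_0 K$ (error $O(1/\rho^{1/2})$ in the coefficient) with the reflection identity and the remainder bound $O(e^{-2\rho})$ yields exactly the claimed formula.
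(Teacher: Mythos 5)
Your core idea---reflect $Q$ across $\partial\Omega$ to $Q^*$, approximate $\varphi_{\ve,Q}$ by a multiple of $K(\tfrac{z-Q^*}{\ve})$, and control the remainder by a barrier argument for $-\Delta+1$---is exactly the paper's strategy, and your discussion of curvature, of the localization near $\bar Q/\ve$, and of the uniformity requirements tracks the paper closely.

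However, the precise decomposition you propose does not close, and this is a genuine gap rather than a stylistic difference. You set $\psi := \varphi_{\ve,Q} + (A_0 + O(\rho^{-1/2}))K(\tfrac{z-Q^*}{\ve})$ and claim that $\psi$ solves $\Delta\psi-\psi=0$ in $\Omega_\ve$ with Neumann data $O(e^{-2\rho})$. The problem is that the coefficient $(A_0+O(\rho^{-1/2}))$ cannot simultaneously be a constant and cancel the Neumann data to that accuracy. The asymptotics \equ{e202} give $w(r)=(A_0+O(1/r))K(r)$ with a genuinely $r$-dependent relative correction of size $O(1/\rho)$ at the scales relevant here. If you fix a constant $A_0'$ with $|A_0'-A_0|=O(\rho^{-1/2})$, then $\psi$ does solve the homogeneous equation, but its Neumann data carries the uncompensated $O(1/\rho)$ relative mismatch, which is only $O(\rho^{-1}e^{-\rho/2})$ in absolute size --- vastly larger than $O(e^{-2\rho})$. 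If instead you allow the coefficient to depend on $z$ so as to cancel the Neumann data, $\psi$ no longer solves $\Delta\psi-\psi=0$ and the maximum-principle bound you invoke is not available. Your claim that the mismatch is $O(e^{-2\rho})$ silently conflates the $O(\ve^{1/2})$ curvature correction (which can indeed be absorbed into $O(e^{-2\rho})$ once $\ve$ is small relative to $\rho$) with the $O(1/\rho)$ asymptotic correction (which cannot be, and must be absorbed into the coefficient).

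The paper resolves this by replacing the single identity with a two-sided comparison: it builds an auxiliary function $\psi_\ve$ solving $\ve^2\Delta\psi_\ve-\psi_\ve=0$ with unit Neumann data, forms the barriers $\varphi_1^{\pm}=-(A_0\mp\rho^{-1/2})K(\tfrac{z-Q^*}{\ve})\pm e^{-d\rho}\psi_\ve$ with $d\geq 2$, and checks $\partial_\nu\varphi_1^-\leq\partial_\nu\varphi_{\ve,Q}\leq\partial_\nu\varphi_1^+$ on $\partial\Omega$ by splitting into $|z-Q|\leq\ve^{3/4}$ (where the reflection matching and curvature control give a margin dominated by $\rho^{-1/2}$) and $|z-Q|\geq\ve^{3/4}$ (where $\partial_\nu w(\tfrac{z-Q}{\ve})$ is superexponentially small, $O(e^{-\ve^{-1/4}})$, and the $e^{-d\rho}$ term alone carries the comparison). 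The comparison principle then sandwiches $\varphi_{\ve,Q}$ between $\varphi_1^-$ and $\varphi_1^+$, which is exactly the claimed form. So the reflection and barrier ideas you identify are right; what is missing is the two-sided comparison with constant coefficients $A_0\mp\rho^{-1/2}$ and the auxiliary barrier $e^{-d\rho}\psi_\ve$, which is what turns those ideas into a proof.
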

\begin{proof}

In Lemma 2.1 of \cite{LNW}, a similar estimate was proved under the condition that $C_1 \ve |\ln \ve | \leq d(Q,\partial \Omega)\leq \delta$. Here we will relax this condition to $c\ve\leq d(Q,\partial \Omega)\leq 10 \ve|\ln\ve|$. The proof is similar. For the sake of completeness, we repeat a modification of the proof here.

Let $\psi_\ve(z)$ be the unique solution of
\begin{equation}
\ve^2\Delta \psi_\ve-\psi_\ve=0 \ \mbox{ in }\Omega,\ \frac{\partial \psi_\ve}{\partial \nu}=0 \ \mbox{ on }\partial \Omega.
\end{equation}
It is easy to see that
\begin{equation}
0<\psi_\ve(z)\leq \psi_1(z)\leq C \ \mbox{ for }\ve<1.
\end{equation}
On the other hand, $\varphi_{\ve,Q}$ satisfies
\begin{equation}
\ve^2\Delta v-v=0 \ \mbox{ in }\Omega,\  \frac{\partial v}{\partial \nu}=\frac{\partial}{\partial \nu}w(\frac{z-Q}{\ve})\  \mbox{ on }\partial \Omega.
\end{equation}
Using (\ref{e201}), we can see that on $\partial \Omega$,
\begin{eqnarray*}
\frac{\partial}{\partial \nu}w(\frac{z-Q}{\ve})&=&\frac{1}{\ve}w'(\frac{z-Q}{\ve})\frac{\langle z-Q,\nu\rangle}{|z-Q|}\\
&=&-(A_n+O(\frac{1}{\rho}))\ve^{\frac{n-3}{2}}
|z-Q|^{-\frac{n+1}{2}}e^{-\frac{|z-Q|}{\ve}}\langle z-Q,\nu\rangle.
\end{eqnarray*}

We use the following comparison function:
\begin{equation}
\varphi_1(z)=-(A_0-\frac{1}{\rho^{\frac{1}{2}}})K(\frac{z-Q^*}{\ve})
+e^{-d\rho}\psi_\ve,
\end{equation}
where $d\geq 2$ is a constant.

For $z\in \partial \Omega$, $|z-Q|\geq \ve^{\frac{3}{4}}$, we have
\begin{eqnarray*}
\frac{\partial \varphi_1(z)}{\partial \nu}=-(A_0-\frac{1}{\rho^{\frac{1}{2}}})K'(\frac{z-Q^*}{\ve})\ve^{-1}\frac{\langle z-Q^*,\nu\rangle}{|z-Q^*|}+e^{-d\rho}\geq \frac{1}{2}e^{-d\rho},
\end{eqnarray*}
\begin{eqnarray*}
\frac{\partial \varphi_{\ve,Q}}{\partial \nu}\leq ce^{-\ve^{-\frac{1}{4}}},
\end{eqnarray*}
so
\begin{eqnarray*}
\frac{\partial \varphi_{\ve,Q}}{\partial \nu}\leq \frac{\partial \varphi_1}{\partial \nu}.
\end{eqnarray*}

For $|z-Q|\leq \ve^{\frac{3}{4}}$, we have
\begin{eqnarray*}
\frac{\partial \varphi_1}{\partial \nu}=\frac{\partial }{\partial \nu}\{-(A_0-\frac{1}{\rho^{\frac{1}{2}}})K(\frac{z-Q^*}{\ve})\}+e^{-d\rho}.
\end{eqnarray*}
Since
\begin{eqnarray*}
&&\frac{\langle z-Q,\nu\rangle}{|z-Q|}=-(1+O(\ve^{\frac{1}{2}}))\frac{\langle z-Q^*,\nu\rangle}{|z-Q^*|},\\
&&\frac{|z-Q|}{\ve}=(1+O(\ve^{\frac{1}{2}}))\frac{|z-Q^*|}{\ve},
\end{eqnarray*}
 we obtain
\begin{eqnarray*}
\frac{\partial \varphi_{\ve,Q}}{\partial \nu}\leq \frac{\partial \varphi_1}{\partial \nu}.
\end{eqnarray*}
By the comparison principle, we have
\begin{equation}
\varphi_{\ve,Q}(z)\leq \varphi_1(z) \mbox{ for }z\in \Omega.
\end{equation}
Similarly, we obtain
\begin{equation}
\varphi_{\ve,Q}\geq {-(A_0+\frac{1}{\rho^{\frac{1}{2}}})K(\frac{z-Q^*}{\ve})-e^{-d\rho}\psi_\ve} \mbox{ for }z\in \Omega.
\end{equation}
\end{proof}

For $\mathbf{Q}=(Q_1,\cdots,Q_k)\in \Lambda_k$, we define
\begin{equation}
w_{Q_i}(x)=w(x-\frac{Q_i}{\ve}),\ w_{\ve,\mathbf{Q}}=\sum_{i=1}^k w_{\ve,Q_i}.
\end{equation}
The next lemma analyzes $w_{\ve,\mathbf{Q}}$ in $\Omega_\ve$. To this end, we divide $\Omega_\ve$ into $k+1$ parts:
\begin{equation}
\Omega_{\ve,i}=\{|x-\frac{Q_i}{\ve}|\leq \frac{\rho}{2}\},\ i=1,\cdots,k,
\end{equation}
\begin{equation}
\Omega_{\ve,k+1}=\Omega_{\ve} \backslash \cup_{i=1}^k\Omega_{\ve,i}.
\end{equation}
Then we have the following lemma
\begin{lemma}\label{lemma2}
For $x\in \Omega_{\ve,i}$,  $i=1,\cdots,k$, we have
\begin{equation}\label{in}
w_{\ve,\mathbf{Q}}=w_{\ve,Q_i}+O(e^{-\frac{\rho}{2}}).
\end{equation}
For $x\in \Omega_{\ve,k+1}$, we have
\begin{equation}\label{out}
w_{\ve,\mathbf{Q}}=O(e^{-\frac{\rho}{2}}).
\end{equation}
\end{lemma}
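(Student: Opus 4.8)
The plan is to decompose the sum $w_{\ve,\mathbf{Q}}=\sum_{j=1}^k w_{\ve,Q_j}$ into the contribution from the spike centered at $Q_i$ and the remaining $k-1$ terms, and to bound the latter using the exponential decay of $w$ recorded in \equ{e201}--\equ{e202}, together with the separation condition $\min_{i\neq j}|Q_i-Q_j|\geq \rho\ve$ built into $\Lambda_k$. First I would use the previous lemma (applied to each $w_{\ve,Q_j}$) to replace each $w_{\ve,Q_j}(x)$ by the genuine translate $w(x-\tfrac{Q_j}{\ve})$ up to an error controlled by $K(\tfrac{z-Q_j^*}{\ve})$ plus $O(e^{-2\rho})$; the boundary-image corrections only appear for those $Q_j$ within $10\ve|\ln\ve|$ of $\partial\Omega$, and since $Q_j^*$ is reflected across the boundary, $|x-\tfrac{Q_j^*}{\ve}|$ is again bounded below by a quantity of order $\rho$ when $x\in\Omega_{\ve,i}$, so these too are absorbed into $O(e^{-\rho/2})$. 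Thus the whole question reduces to estimating $\sum_{j\neq i} w(x-\tfrac{Q_j}{\ve})$ for $x\in\Omega_{\ve,i}$, and similarly $\sum_{j} w(x-\tfrac{Q_j}{\ve})$ for $x\in\Omega_{\ve,k+1}$.

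The core of the argument is a summation-of-exponentials estimate that must be \emph{uniform in $k$}. For $x\in\Omega_{\ve,i}$ one has $|x-\tfrac{Q_i}{\ve}|\leq\rho/2$, hence for $j\neq i$, $|x-\tfrac{Q_j}{\ve}|\geq |\tfrac{Q_i}{\ve}-\tfrac{Q_j}{\ve}|-\tfrac{\rho}{2}\geq \rho-\tfrac{\rho}{2}=\tfrac{\rho}{2}$. Using \equ{e201}, $w(r)\leq C e^{-r}$ for $r$ large, so each term is $O(e^{-\rho/2})$; the point is to show the sum over $j$ is still $O(e^{-\rho/2})$ and not merely $O(k\,e^{-\rho/2})$. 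I would group the other centers $Q_j$ into dyadic annuli: the points with $|\tfrac{Q_j}{\ve}-\tfrac{Q_i}{\ve}|\in[m\tfrac{\rho}{2},(m+1)\tfrac{\rho}{2})$ number at most $C m^{n-1}$ (a volume-packing count using the mutual separation $\rho\ve$ of the $Q_j$, rescaled), and each contributes $O(e^{-m\rho/2})$. Then
\[
\sum_{j\neq i} w\!\left(x-\tfrac{Q_j}{\ve}\right)\leq C\sum_{m=1}^{\infty} m^{n-1} e^{-m\rho/2}\leq C' e^{-\rho/2},
\]
where the last inequality holds because for $\rho$ large the geometric decay dominates the polynomial factor and the tail of the series is controlled by its first term. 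The estimate \equ{out} for $x\in\Omega_{\ve,k+1}$ is identical except that now $x$ lies outside every ball of radius $\rho/2$ about the rescaled centers, so \emph{all} $k$ terms obey $|x-\tfrac{Q_j}{\ve}|\geq\rho/2$ and the same dyadic sum gives $O(e^{-\rho/2})$.

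The main obstacle is precisely the uniformity in $k$: a naive bound would give a factor of $k\sim\ve^{-n}$, which is catastrophic, so the packing/counting step — showing that the number of centers in the $m$-th shell grows only polynomially in $m$ and that the resulting series converges with a bound independent of how many spikes there are — is the crux. Two technical points need care here: the counting of lattice-like points in shells must genuinely use the $\rho\ve$-separation from the definition of $\Lambda_k$ (not the boundary-image separation), and one must verify that the boundary corrections $K(\tfrac{z-Q_j^*}{\ve})$ coming from the previous lemma are themselves summable in $j$ by the same dyadic device — this works because the reflected points $Q_j^*$ inherit a separation of order $\rho\ve$ among themselves (from the second condition in $\Lambda_k$) and stay at distance $\gtrsim\rho$ from $x\in\Omega_{\ve,i}$. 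Once these are in place, choosing $\rho$ large (independently of $\ve$ and $k$) makes all error sums $O(e^{-\rho/2})$ and the lemma follows.
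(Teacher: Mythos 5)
Your proposal is correct and follows essentially the same route as the paper: decompose $w_{\ve,\mathbf{Q}}$ into the $i$-th term plus the rest, use Lemma~2.1 to reduce both the direct and reflected contributions to sums of exponentials, and control the sum by an annular shell-counting argument that exploits the $\rho\ve$-separation in $\Lambda_k$ so that the bound is uniform in $k$. The only cosmetic difference is that you count $O(m^{n-1})$ centers in the $m$-th shell, whereas the paper uses the cruder bound $c_n^{m}$ with $c_n=6^n$; both yield $O(e^{-\rho/2})$ once $\rho$ is large.
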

\begin{proof}
For $j\neq i$, and $x\in \Omega_{\ve,i}$, we have
\begin{eqnarray*}
w_{\ve,Q_j}&=&w(x-\frac{Q_j}{\ve})-\varphi_{\ve,Q_j}(\ve x)\\
&=& O(e^{-|x-\frac{Q_j}{\ve}|}+e^{-|x-\frac{Q_j^*}{\ve}|})\\
&=& O(e^{-|x-\frac{Q_j}{\ve}|})
\end{eqnarray*}
by the definition of the configuration set. Next we observe that given a a ball of size $\rho$, there are at most $c_n:=6^n$ number of non-overlapping balls of size $\rho$ surrounding this ball.  Thus  we have  for  $x\in \Omega_{\ve,i}$,
\begin{eqnarray*}
\sum_{j\neq i}w_{\ve,Q_j} (x) &=&O(\sum_{j\neq i}e^{-|x-\frac{Q_j}{\ve}|})+O(e^{-2\rho})\\
& \leq & c_n e^{-\frac{\rho}{2}}+c_n^2e^{-\rho}+\cdots+c_n^j e^{-\frac{j\rho}{2}}+\cdots \\
&\leq & \sum_{j=1}^\infty e^{j(\log c_n-\frac{\rho}{2})}+O(e^{-2\rho})\\
& \leq &O( e^{-(\frac{\rho}{2}-\log c_n)})\\
& \leq & O(e^{-\frac{\rho}{2}}),
\end{eqnarray*}
if $c_n<e^{\frac{\rho}{2}}$, which is true for $\rho$ large enough.  So this proves (\ref{in}). The proof of (\ref{out}) is similar.

\end{proof}

\medskip

\noindent
 \begin{remark}
 In the following sections, we will use the definition of the configuration and the estimate as above frequently.
\end{remark}

The following lemma is proved in Lemma 2.3 of \cite{BL}.

\begin{lemma}\label{lemma22}
Let $f\in C(\R^n)\cap L^\infty(\R^n)$, $g\in C(\R^n)$ be radially symmetric and satisfy for some $\alpha\geq 0$, $\beta\geq 0$, $\gamma_0\in \R$,
\begin{eqnarray*}
&&f(x)exp(\alpha|x|)|x|^\beta\to \gamma_0 \mbox{ as } |x|\to \infty,\\
&&\int_{\R^n}|g(x)|exp(\alpha|x|)(1+|x|^\beta)dx<\infty.
\end{eqnarray*}
Then
\begin{equation*}
exp(\alpha|y|)|y|^\beta\int_{\R^n}g(x+y)f(x)dx\to \gamma_0\int_{\R^n}g(x)exp(-\alpha x_1)dx
\mbox{ as }|y|\to \infty .
\end{equation*}
\end{lemma}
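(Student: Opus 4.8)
\textbf{Proof proposal for Lemma \ref{lemma22}.}

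The plan is to split the convolution integral into the region near $-y$, where the decay factor of $f$ translated by $y$ concentrates all the mass, and the complementary region, which is negligible. Write $I(y) = \exp(\alpha|y|)|y|^\beta \int_{\R^n} g(x+y) f(x)\,dx$ and change variables $x \mapsto x - y$ to get $I(y) = \exp(\alpha|y|)|y|^\beta \int_{\R^n} g(x) f(x-y)\,dx$. The heuristic is that for $x$ in any fixed compact set, $f(x-y) \sim \gamma_0 \exp(-\alpha|x-y|)|x-y|^{-\beta}$ as $|y|\to\infty$, and $|x-y| = |y| - x_1' + o(1)$ where $x_1'$ is the component of $x$ along $y/|y|$; since $g$ is radial one can rotate so that $y = |y| e_1$ and then $|x-y|\to |y| - x_1$. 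Hence $\exp(\alpha|y|)|y|^\beta f(x-y) \to \gamma_0 \exp(-\alpha x_1)$ pointwise, and formally $I(y)\to \gamma_0 \int_{\R^n} g(x)\exp(-\alpha x_1)\,dx$.

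To make this rigorous I would proceed in the following steps. First, fix $R>0$ large and split $\int_{\R^n} g(x) f(x-y)\,dx = \int_{|x|\le R} + \int_{|x|>R}$. For the tail $|x|>R$: use $\|f\|_\infty < \infty$ together with $\exp(\alpha|y|)|y|^\beta \le C\exp(\alpha|x-y|)(1+|x-y|^\beta)$ for $|x|\le |y|/2$ (valid since $|x-y|\ge|y|/2$ and $|x-y|\le|y|+|x|$), and for $|x|>|y|/2$ use that $\exp(\alpha|y|)|y|^\beta$ is controlled by moving the exponential weight onto $g$; in all cases one bounds the tail contribution by $C\int_{|x|>R}|g(x)|\exp(\alpha|x|)(1+|x|^\beta)\,dx$, which tends to $0$ as $R\to\infty$ by the integrability hypothesis on $g$, uniformly in $y$ for $|y|$ large. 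Second, for the main part $|x|\le R$ with $|y|\ge 2R$: by the WLOG rotation $y=|y|e_1$ (legitimate because $g$ is radial, so $\int g(x)f(x-y)\,dx$ depends only on $|y|$), one has $|x-y|\to\infty$ uniformly on $|x|\le R$, so the hypothesis $f(x)\exp(\alpha|x|)|x|^\beta\to\gamma_0$ gives $\exp(\alpha|x-y|)|x-y|^\beta f(x-y)\to\gamma_0$ uniformly on $|x|\le R$. Combining this with the elementary expansion $\exp(\alpha|y|)|y|^\beta / (\exp(\alpha|x-y|)|x-y|^\beta) \to \exp(-\alpha x_1)$ uniformly on $|x|\le R$ (here $|x-y| = \sqrt{(|y|-x_1)^2 + |x'|^2} = |y| - x_1 + O(1/|y|)$), one gets $\exp(\alpha|y|)|y|^\beta f(x-y) \to \gamma_0\exp(-\alpha x_1)$ uniformly on $|x|\le R$. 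Hence $\exp(\alpha|y|)|y|^\beta \int_{|x|\le R} g(x)f(x-y)\,dx \to \gamma_0\int_{|x|\le R} g(x)\exp(-\alpha x_1)\,dx$ as $|y|\to\infty$. Finally, let $R\to\infty$: the tail of the limit integral $\int_{|x|>R}|g(x)|\exp(-\alpha x_1)\,dx \le \int_{|x|>R}|g(x)|\exp(\alpha|x|)\,dx \to 0$ is also controlled, and a standard $\varepsilon/3$ argument (tail of $I(y)$ small, limit tail small, main part converges) yields $I(y)\to\gamma_0\int_{\R^n} g(x)\exp(-\alpha x_1)\,dx$.

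The main obstacle is obtaining the tail estimate uniformly in $y$: one must verify that the weight $\exp(\alpha|y|)|y|^\beta$, when paired with $f(x-y)$ or with $\|f\|_\infty$, is genuinely dominated by $\exp(\alpha|x|)(1+|x|^\beta)$ up to a constant independent of $y$, splitting into the subregions $|x|\le|y|/2$ (where $|x-y|\ge|y|/2$ carries the exponential) and $|x|>|y|/2$ (where $|x|$ itself is comparable to $|y|$). The polynomial factors $|y|^\beta$ versus $|x-y|^\beta$ require a little care when $\beta>0$ but cause no real difficulty since $|x-y|\ge|y|/2$ on the relevant region. Everything else is routine dominated-convergence-type bookkeeping.
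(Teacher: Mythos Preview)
The paper does not prove this lemma at all; it simply cites Lemma~2.3 of Bahri--Li \cite{BL}. So there is no ``paper's own proof'' to compare against, and your dominated-convergence strategy (change variables $x\mapsto x-y$, rotate so that $y=|y|e_1$ using the radiality of $g$, then pass to the limit) is exactly the standard argument.

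There is, however, a genuine slip in your tail estimate. Your proposed splitting into $\{R<|x|\le |y|/2\}$ and $\{|x|>|y|/2\}$ does not work as written. On $\{|x|>|y|/2\}$ you say ``$|x|$ itself is comparable to $|y|$'' and invoke $\|f\|_\infty$; but $|y|<2|x|$ only gives $\exp(\alpha|y|)\le \exp(2\alpha|x|)$, and $|g(x)|\exp(2\alpha|x|)$ is \emph{not} assumed integrable. Likewise the displayed inequality $\exp(\alpha|y|)|y|^\beta\le C\exp(\alpha|x-y|)(1+|x-y|^\beta)$ on $\{|x|\le|y|/2\}$ is false in general (you only get $|y|\le 2|x-y|$, hence $\exp(\alpha|y|)\le\exp(2\alpha|x-y|)$), and in any case pairing it with $\|f\|_\infty$ rather than with the decay of $f$ throws away the needed cancellation.

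The fix is to split by $|x-y|$ instead. If $|x-y|\ge 1$, use the hypothesis on $f$ to get $|f(x-y)|\le M\exp(-\alpha|x-y|)|x-y|^{-\beta}$; then $|y|-|x-y|\le|x|$ and $|y|^\beta\le C(|x|^\beta+|x-y|^\beta)$ give
\[
\exp(\alpha|y|)|y|^\beta|f(x-y)|\le C\exp(\alpha|x|)(1+|x|^\beta).
\]
If $|x-y|<1$, then $|y|\le|x|+1$, and now $\|f\|_\infty$ suffices:
\[
\exp(\alpha|y|)|y|^\beta|f(x-y)|\le \|f\|_\infty\, e^{\alpha}\exp(\alpha|x|)\,C(1+|x|^\beta).
\]
Either way the integrand is dominated by $C|g(x)|\exp(\alpha|x|)(1+|x|^\beta)\in L^1$, uniformly in $y$, and your pointwise convergence step then finishes the proof via the dominated convergence theorem (your $\varepsilon/3$ argument is just a restatement of it).
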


As in \cite{LNW}, we now define the following quantities:
\begin{equation}
B_\ve(Q_j)=-\int_{\Omega_\ve}w_{Q_j}^p\varphi_{\ve,Q_j}dx, \ \ B_{\ve}(Q_i,Q_j)=\int_{\Omega_\ve}w_{Q_i}^pw_{Q_j}dx.
\end{equation}
Then we have the following:
\begin{lemma}\label{lemma3}
For $\mathbf{Q}=(Q_1,\cdots,Q_k)\in \Lambda_k$, it holds that
\begin{equation}
B_\ve(Q_j)=(\gamma+O(\frac{1}{\sqrt{\rho}}))w(\frac{2d(Q_j,\partial \Omega)}{\ve})+O(e^{-(1+\xi)\rho}),
\end{equation}
\begin{equation}
B_\ve(Q_i,Q_j)=(\gamma+O(\frac{1}{\sqrt{\rho}}))w(\frac{2d(Q_j,\partial \Omega)}{\ve})+O(e^{-(1+\xi)\rho}),
\end{equation}
for some $\xi>0$ independent of $\ve$ and $k$ for $\ve$ sufficiently small,
where
\begin{equation}
\label{gammadef}
\gamma=\int_{\R^n}w^p(y)e^{-y_1}dy.
\end{equation}
\end{lemma}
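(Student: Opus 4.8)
The plan is to compute the two quantities $B_\ve(Q_j)$ and $B_\ve(Q_i,Q_j)$ directly, reducing each to an integral over $\R^n$ against the profile $w^p$ and then applying the asymptotic estimates available from the earlier lemmas. For $B_\ve(Q_j) = -\int_{\Omega_\ve} w_{Q_j}^p\,\varphi_{\ve,Q_j}\,dx$, I would first invoke Lemma~2.1 (in the form proved above) to replace $\varphi_{\ve,Q_j}(\ve x)$ by $-(A_0+O(\rho^{-1/2}))K\big(x-\tfrac{Q_j^*}{\ve}\big) + O(e^{-2\rho})$. The contribution of the $O(e^{-2\rho})$ term, integrated against $w_{Q_j}^p = w(\cdot - \tfrac{Q_j}{\ve})^p$ which has finite $L^1$ norm, is $O(e^{-2\rho})$, absorbed into $O(e^{-(1+\xi)\rho})$ for suitable $\xi$. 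So the main term is
\[
(A_0+O(\tfrac1{\sqrt\rho}))\int_{\Omega_\ve} w\big(x-\tfrac{Q_j}{\ve}\big)^p\, K\big(x-\tfrac{Q_j^*}{\ve}\big)\,dx.
\]
Changing variables $y = x - \tfrac{Q_j}{\ve}$ and writing $\tfrac{Q_j^*-Q_j}{\ve} = \tfrac{2d(Q_j,\partial\Omega)}{\ve}\,\nu_{\bar Q_j}$, whose length is $\tfrac{2d(Q_j,\partial\Omega)}{\ve} \geq \rho$, I would apply Lemma~\ref{lemma22} with $f = w^p$ (which decays like $e^{-|x|}$, i.e. $\alpha = 1$, and has a power correction governed by \equ{e201}, so $\beta = \tfrac{n-1}{2}p$ — or more cleanly, one matches the decay rates so that the product $w^p K$ concentrates near $y=0$) and $g = K$. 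This yields $\int w^p(y) K(y + \zeta)\,dy \sim c\, K(|\zeta|)$ up to the stated multiplicative error, and by \equ{e202} $K(|\zeta|)$ is comparable to $w(|\zeta|) = w\big(\tfrac{2d(Q_j,\partial\Omega)}{\ve}\big)$; collecting the constants into $\gamma = \int_{\R^n} w^p(y) e^{-y_1}\,dy$ gives the first formula. One must also truncate the integral from $\R^n$ to $\Omega_\ve$; the tail outside $\Omega_\ve$ is exponentially small in $\ve^{-1}$ (since $d(Q_j,\partial\Omega) \leq 10\ve|\ln\ve|$ forces $Q_j$ near the boundary, but $w^p K$ decays fast), hence harmless.

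For $B_\ve(Q_i,Q_j) = \int_{\Omega_\ve} w_{Q_i}^p\, w_{Q_j}\,dx$ the computation is entirely analogous but with a twist: the relevant separation is either $\tfrac{|Q_i - Q_j|}{\ve} \geq \rho$ (when $Q_i \neq Q_j$ are both interior-type points) or, when $Q_j$ is close to the boundary, the reflected distance. Since the statement asserts the same asymptotic $\sim \gamma\, w\big(\tfrac{2d(Q_j,\partial\Omega)}{\ve}\big)$, the case being recorded here must be the one where the dominant interaction is between $w_{Q_i}^p$ and the reflected spike at $Q_j^*$ — i.e. this is $B_\ve(Q_i,Q_j^*)$-type behavior governed by the configuration constraint that the "image" distances are also $\geq \rho\ve$. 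I would again change variables, center at $\tfrac{Q_i}{\ve}$, and apply Lemma~\ref{lemma22} with $f = w^p$, $g = w$, using \equ{e201} to identify $w(r) \sim A_n r^{-(n-1)/2} e^{-r}$ so that the same constant $\gamma$ appears. The error terms $O(\rho^{-1/2})$ come from the $O(1/r)$ corrections in \equ{e201}–\equ{e202} evaluated at $r \gtrsim \rho$, and $O(e^{-(1+\xi)\rho})$ collects the secondary-spike interactions (there are at most $c_n^j$ spikes at distance $\sim j\rho$, summed geometrically as in the proof of Lemma~\ref{lemma2}, which converges and produces a gain of a factor $e^{-\xi\rho}$ for some $\xi > 0$ provided $\rho$ is large).

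The main obstacle, I expect, is bookkeeping the uniformity in $k$: one cannot afford a $k$-dependent constant anywhere, so every summation over the other spikes $Q_l$, $l \neq i,j$, must be controlled by the geometric-packing argument from Lemma~\ref{lemma2} (at most $c_n = 6^n$ neighbors per shell) rather than by a crude bound $k \cdot (\text{something})$. A secondary technical point is verifying that Lemma~\ref{lemma22} applies with the correct exponents $\alpha, \beta$ so that the limiting constant is exactly $\gamma = \int_{\R^n} w^p e^{-y_1}\,dy$ and not some other normalization; this requires carefully matching \equ{e201} (for the $w$-factor or the gradient) against the $K$-asymptotics \equ{e202}, and checking that the power-law prefactors $r^{-(n-1)/2}$ from $f$ and $g$ combine to reproduce the prefactor in $w\big(\tfrac{2d(Q_j,\partial\Omega)}{\ve}\big)$ on the right-hand side. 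Once these two points are settled, the rest is routine estimation of exponentially small tails.
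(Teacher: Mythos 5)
Your overall plan --- substitute the expansion for $\varphi_{\ve,Q_j}$ from Lemma 2.1, recenter at the spike, extend the domain to $\R^n$ up to an exponentially small tail, and then invoke the Bahri--Li interaction estimate (Lemma \ref{lemma22}) to produce $\gamma\,w(2d(Q_j,\partial\Omega)/\ve)$ --- is the route the paper intends (it defers to Lemma 2.5 of Lin--Ni--Wei and cites exactly Lemmas \ref{lemma2} and \ref{lemma22}). However, your assignment of roles in Lemma \ref{lemma22} is wrong in a way that makes the step fail. You set $f=w^p$ with $\alpha=1$, but $w^p(r)\sim A_n^p\,r^{-p(n-1)/2}e^{-pr}$, so $w^p$ decays at rate $p>1$, not $1$. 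With $f=w^p$ and $\alpha=1$ the hypothesis $f(x)e^{\alpha|x|}|x|^\beta\to\gamma_0$ forces $\gamma_0=0$, so the lemma returns only $\int w^p(y)K(y+\zeta)\,dy=o(e^{-|\zeta|}|\zeta|^{-\beta})$ and the constant $\gamma$ never materializes; alternatively, if you instead take $\alpha=p$ to make $\gamma_0\neq 0$, the integrability hypothesis on $g=K$ fails. The roles must be reversed: take $f=K$ (for $B_\ve(Q_j)$), which genuinely has $\alpha=1$, $\beta=\tfrac{n-1}{2}$, $\gamma_0$ given by the limiting constant in \equ{e202}; take $g=w^p$, whose faster decay makes $\int|g|\,e^{|x|}(1+|x|^{(n-1)/2})\,dx<\infty$ because $p>1$. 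It is then the factor $\int g(x)e^{-x_1}\,dx=\int_{\R^n}w^p e^{-x_1}\,dx=\gamma$ that emerges, and combining with the prefactor $(A_0+O(\rho^{-1/2}))$ from Lemma 2.1 and using \equ{e202} to convert $K$ back to $w$ gives the claimed $(\gamma+O(\rho^{-1/2}))w(2d(Q_j,\partial\Omega)/\ve)$.

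Two smaller points. For $B_\ve(Q_i,Q_j)=\int w_{Q_i}^p w_{Q_j}$ the reflected-point mechanism you invoke is unnecessary and somewhat misleading: this is a direct two-spike interaction between the peaks at $Q_i/\ve$ and $Q_j/\ve$, and applying Lemma \ref{lemma22} with $f=w$ (again $\alpha=1$) and $g=w^p$ gives $(\gamma+O(\rho^{-1/2}))w(|Q_i-Q_j|/\ve)+O(e^{-(1+\xi)\rho})$; the appearance of $w(2d(Q_j,\partial\Omega)/\ve)$ in the displayed formula is almost certainly a copy-paste typo in the statement, so there is nothing to rescue by reflections. Also, both $B_\ve(Q_j)$ and $B_\ve(Q_i,Q_j)$ are pairwise quantities, so the geometric-packing sum over the remaining $k-2$ spikes from the proof of Lemma \ref{lemma2} does not actually enter here; the only place $k$-uniformity could intrude is through Lemma 2.1, which is already established uniformly.
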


\begin{remark}
Note that $\gamma>0$. See Lemma 4.7 of \cite{NW}.
\end{remark}

\begin{proof}
By Lemma \ref{lemma2} and \ref{lemma22}, the proof is similar to
 that of Lemma 2.5 in \cite{LNW}. We omit the details.
\end{proof}

\section{Linear Theory}\label{sec3}
In this section, we study a linear theory that allow us to perform the finite dimensional reduction procedure.  The proof is similar to Section 3 of \cite{LNW}. However, the main concern is to show that all the constants are independent of the number $k$.  Fixing an integer $k$ satisfying
\begin{equation}
1 \leq k \leq \frac{\delta }{\ve^n}
\end{equation}
and  $\mathbf{Q}\in \Lambda_k$, we define the following functions:
\begin{equation}\label{zij}
Z_{ij}=\frac{\partial w_{Q_i}}{\partial x_j}\chi_i(x), \mbox{ for } i=1,\cdots,k, \ j=1,\cdots,n,
\end{equation}
where $w_{Q_i}(x)=w(x-\frac{Q_i}{\ve})$,
$\chi_i(x)=\chi(\frac{2|\ve x-Q_i|}{(\rho-1)\ve})$ and $\chi(t)$ is a cut off function such that $\chi(t)=1$ for $|t|\leq 1$ and $\chi(t)=0$ for $|t|\geq \frac{\rho^2}{\rho^2-1}$. Note that the support of $Z_{ij}$ belongs to $B_{\frac{\rho^2}{2(\rho+1)}}(\frac{Q_i}{\ve})$.

We consider the following linear problem: Given $h$, find a function $\phi$ satisfying
\begin{equation}\label{e301}
\left\{ \begin{array}{ll}
L (\phi ) :=\Delta \phi-\phi+pw^{p-1}_{\ve,\mathbf{Q}}\phi=h+\sum_{i=1,\cdots,k,j=1,\cdots,n}   c_{ij} Z_{ij} \quad {\mbox {in}} \quad \Omega_\ve \\
\\
 {\partial \phi \over \partial \nu} = 0  \quad {\mbox {on}} \quad \partial \Omega_\ve \\
 \\
 \int_{\Omega_\ve} \phi Z_{ij}  =0 \quad
 {\mbox{for}} \quad i=1,\cdots,k,\ j=1,\cdots,n.
 \end{array} \right.
\end{equation}

Let
\begin{equation}
\label{Wdef}
W : =  \sum_{\mathbf{Q} \in \Lambda_k} e^{-\eta \, |\cdot - \frac{Q_i}{\ve} |}.
\end{equation}
Given $0<\eta < 1$, consider the norm
\begin{equation}\label{e302}
 \quad \| h \|_{*} =\sup_{x \in \Omega_\ve} |   W(x)^{-1}  h(x) |
\end{equation}
where $(Q_1,\cdots,Q_k) \in \Lambda_k$.

\begin{proposition} \label{p301}
There exist positive numbers $\eta \in (0,1)$,
$\ve_0>0$, $\rho_0>0$ and $C>0$, such that for all $0< \ve < \ve_0$, $\rho>\rho_0$, and for any given $h$ with $\|h\|_*$ norm bounded,
there is a unique solution $(\phi ,\{ c_{ij}\}  )$  to problem
\equ{e301}. Furthermore
\begin{equation}\label{apest}
 \|\phi \|_{*} \le C \|h\|_*  .
\end{equation}
\end{proposition}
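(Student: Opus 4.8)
## Proof Strategy for Proposition 3.1

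\textbf{Overall approach.} The plan is to prove the a priori estimate \eqref{apest} first, by contradiction, and then deduce existence and uniqueness by a standard Fredholm/Riesz argument. The key point throughout is that every constant must be tracked to be independent of $k$; this forces us to work with the weighted norm $\|\cdot\|_*$ built from the sum $W$ over all spike centers, rather than a single-bump weight, and to exploit the uniform separation $|Q_i-Q_j|\geq\rho\ve$ encoded in $\Lambda_k$ together with the combinatorial counting of neighboring balls ($c_n=6^n$) already used in Lemma \ref{lemma2}.

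\textbf{Step 1: The a priori estimate.} Suppose \eqref{apest} fails. Then there are sequences $\ve_m\to 0$ (or $\rho_m\to\infty$), $k_m$, $\mathbf{Q}^{(m)}\in\Lambda_{k_m}$, and solutions $(\phi_m,\{c_{ij}^{(m)}\})$ of \eqref{e301} with $\|\phi_m\|_*=1$ but $\|h_m\|_*\to 0$. First I would show the multipliers are negligible: testing \eqref{e301} against $Z_{lj}$ and using that the $Z_{ij}$ are almost-orthogonal in $L^2$ with $\int Z_{ij}^2$ bounded above and below by positive constants independent of $k$ (since the supports $B_{\rho^2/(2(\rho+1))}(Q_i/\ve)$ are disjoint for $\rho$ large), one solves the resulting nearly-diagonal linear system to get $|c_{ij}^{(m)}|\leq C(\|h_m\|_*+o(1))$. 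Next, the core local argument: fix any index $i$ and consider $\phi_m$ near $Q_i^{(m)}/\ve$. Using the equation, the decay of $w_{\ve,\mathbf{Q}}$ (Lemma \ref{lemma2} gives $w^{p-1}_{\ve,\mathbf{Q}}\le w^{p-1}_{\ve,Q_i}+O(e^{-(p-1)\rho/2})$ inside $\Omega_{\ve,i}$ and $O(e^{-(p-1)\rho/2})$ outside), and elliptic estimates, I would derive a pointwise bound $|\phi_m(x)|\le C(\|h_m\|_* + \|\phi_m\|_{L^\infty(\cup_i B_{R}(Q_i/\ve))})\,W(x)$ for a suitable fixed large $R$ — i.e. a barrier/comparison argument with comparison function a multiple of $W$ plus $\sum_i e^{-\eta'|x-Q_i/\ve|}$ for some $\eta<\eta'<1$, using that $-\Delta + 1 - pw^{p-1}_{\ve,\mathbf{Q}} \ge \frac12$ outside the union of fixed balls. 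This reduces matters to the behavior of $\phi_m$ on the fixed balls around each center.

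\textbf{Step 2: Blow-up and the nondegeneracy of $w$.} Having localized, pick $i_m$ realizing (up to a factor $2$) the supremum defining $\|\phi_m\|_*=1$, and set $\tilde\phi_m(y)=\phi_m(y+Q_{i_m}^{(m)}/\ve)$. By the previous step the mass concentrates near $y=0$, and $\tilde\phi_m$ is bounded in $L^\infty_{loc}$; by elliptic regularity it converges (subsequence) in $C^1_{loc}(\R^n)$ to a bounded $\tilde\phi_\infty$ solving the limiting linearized equation $\Delta\tilde\phi_\infty-\tilde\phi_\infty+pw^{p-1}\tilde\phi_\infty=0$ on $\R^n$ (the half-space case, when $d(Q_{i_m},\partial\Omega)$ stays $O(\ve)$, is handled by an even reflection across the flattened boundary using the Neumann condition, again landing on the whole-space operator). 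The orthogonality $\int\phi_m Z_{i_m j}=0$ passes to the limit to give $\int_{\R^n}\tilde\phi_\infty\,\partial_j w=0$ for all $j$. By the well-known nondegeneracy of $w$ (kernel of the linearized operator spanned exactly by $\partial_1 w,\dots,\partial_n w$), we get $\tilde\phi_\infty\equiv 0$. Combined with Step 1 and the bound on the $c_{ij}^{(m)}$, this forces $\|\phi_m\|_*\to 0$, contradicting $\|\phi_m\|_*=1$. Hence \eqref{apest} holds.

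\textbf{Step 3: Existence and uniqueness.} With the a priori bound in hand, existence follows by a standard argument: reformulate \eqref{e301} in the Hilbert space $H=\{\phi\in H^1(\Omega_\ve): \int\phi Z_{ij}=0\}$ (Neumann boundary condition being natural), write it via Riesz representation as $\phi = T\phi + \bar h$ where $T$ is compact (the term $pw^{p-1}_{\ve,\mathbf{Q}}\phi$ gives compactness through Rellich, with the exponential decay ensuring the operator is well-defined uniformly in $k$), and apply Fredholm: the a priori estimate shows the homogeneous problem has only the trivial solution, so $I-T$ is invertible and the solution exists and is unique, with \eqref{apest} giving the norm control. One must check the passage from the $\|\cdot\|_*$ estimate to the $H^1$ solvability is uniform in $k$; this is where the global weight $W$ and the $\ell^1$-summability $\sum_j c_n^j e^{-j\rho/2}<\infty$ (as in Lemma \ref{lemma2}) are used once more.

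\textbf{Main obstacle.} The delicate part is Step 1 — obtaining the pointwise comparison estimate $|\phi_m|\lesssim W$ with a constant independent of $k$. The naive barrier $\sum_i e^{-\eta|x-Q_i/\ve|}$ is itself not a strict supersolution of $-\Delta+1-pw^{p-1}_{\ve,\mathbf{Q}}$ near the centers, and the spikes are only $O(\ve)$ apart (distance $\ge\rho\ve$ in rescaled variables, so $\ge\rho$ — bounded below but not large in the original scale), so the interaction tails genuinely overlap and must be summed carefully using the $6^n$-neighbor bound. Making the barrier argument work uniformly — choosing $\eta$ small enough that $W$ dominates but the operator still controls it away from a fixed neighborhood of the centers, while absorbing the bounded-region contribution into the blow-up step — is the crux, and is exactly the place where the analysis departs from the single-spike theory in \cite{LNW}.
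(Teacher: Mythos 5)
Your proposal follows essentially the same route as the paper: the a priori estimate is proved by contradiction (estimate the multipliers $c_{ij}$ by testing against $Z_{ij}$, use the weight $W$ as a barrier outside fixed balls around the centers, then blow up near a chosen center and invoke the nondegeneracy of $w$), and existence/uniqueness is then obtained by the Fredholm alternative in the orthogonal complement $\mathcal{H}$. One small remark: the ``half-space'' blow-up scenario you allow for does not actually occur here, since the constraint $|Q_j-Q_j^*|\geq\rho\ve$ in $\Lambda_k$ forces $d(Q_j,\partial\Omega)\geq\rho\ve/2$, so after rescaling the distance to $\partial\Omega_\ve$ is $\geq\rho/2\to\infty$ and only the whole-space linearized operator appears in the limit.
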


The proof of the above Proposition, which we postpone to the end of
this section, is based on Fredholm alternative Theorem for compact
operator and an a-priori bound for solution to \equ{e301} that we
state (and prove) next.

\begin{proposition} \label{p302}
 Let $h$ with $\| h \|_* $ bounded and
assume that $(\phi , \{c_{ij} \} )$ is a solution to \equ{e301}.
Then there exist positive numbers $\ve_0$, $\rho_0$ and $C$, such that for all $0<\ve <\ve_0$, $\rho>\rho_0$ and $\mathbf{Q}\in \Lambda_k$, one has
\begin{equation}\label{apest1}
 \|\phi \|_{*} \leq C \|h\|_*  ,
\end{equation}
where $C$ is a positive constant independent of $\ve$, $\rho$, $k$ and $\mathbf{Q}\in \Lambda_k$.
\end{proposition}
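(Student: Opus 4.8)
The plan for Proposition~\ref{p302} is the standard contradiction/blow-up argument for linear estimates of this type, as in Section~3 of \cite{LNW}; the only genuinely delicate point is that every constant be uniform in $k$, $\rho$ and $\mathbf Q\in\Lambda_k$. Two facts about the weight $W$ of \equ{Wdef} make this possible: since the centres $Q_i/\ve$ are $\rho$-separated in $\Omega_\ve$, the packing count already used in Lemma~\ref{lemma2} (a fixed ball meets at most $6^n$ of the balls $B_\rho(Q_j/\ve)$, with a convergent geometric tail) gives $\sup_{\Omega_\ve}W\le C(n,\eta)$ \emph{independently of $k$}, and trivially $W(x)\ge e^{-\eta R}$ whenever $\mathrm{dist}(x,Q_i/\ve)\le R$ for some $i$. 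I would first dispose of the Lagrange multipliers. Multiplying \equ{e301} by $Z_{ij}$ and integrating over $\Omega_\ve$, the boundary contributions are negligible because the second defining condition of $\Lambda_k$ (taken with equal indices) forces $d(Q_i,\partial\Omega)\ge\min\{10\ve|\ln\ve|,\tfrac{\rho}{2}\ve\}$, so $\partial_\nu Z_{ij}$ is exponentially small on $\partial\Omega_\ve$; moreover $\tfrac{\rho^2}{\rho+1}<\rho$, so the supports of $Z_{lm}$ and $Z_{ij}$ are disjoint for $l\neq i$ and the system decouples spike by spike, each block being $\tfrac1n\big(\int_{\R^n}|\nabla w|^2\big)\mathrm{Id}+O(e^{-\rho})$. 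Estimating the right-hand sides through $L(Z_{ij})=p\big(w^{p-1}_{\ve,\mathbf Q}-w^{p-1}_{Q_i}\big)\partial_jw_{Q_i}\chi_i+(\text{cut-off terms})$, Lemma~\ref{lemma2} and the bound on $\sup W$, one gets
\be\label{cijbound}
\max_{i,j}|c_{ij}|\le C\|h\|_*+\omega_{\ve,\rho}\,\|\phi\|_*,\qquad C=C(n,p),
\ee
with $\omega_{\ve,\rho}\to0$ as $\ve\to0$ and $\rho\to\infty$, uniformly in $k$ and $\mathbf Q$.

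Suppose the proposition fails. Negating the statement with $\rho_0\to\infty$ and $\ve_0\to0$ yields $\ve_m\to0$, $\rho_m\to\infty$, integers $k_m$, $\mathbf Q^m\in\Lambda_{k_m}$, data $h_m$ with $\|h_m\|_*\to0$ and solutions $(\phi_m,\{c^m_{ij}\})$ of \equ{e301} with $\|\phi_m\|_*=1$; by \equ{cijbound}, $\max_{i,j}|c^m_{ij}|\to0$. The first step is to show $\|\phi_m\|_{L^\infty(\Omega_{\ve_m}\cap\bigcup_iB_{R_0}(Q^m_i/\ve_m))}\to0$ for a fixed large $R_0=R_0(n,p,\eta)$. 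Otherwise pick, along a subsequence, $i_m$ and $x_m$ with $|x_m-Q^m_{i_m}/\ve_m|\le R_0$ and $|\phi_m(x_m)|\ge\delta_0>0$, and translate $\tilde\phi_m(y)=\phi_m(y+Q^m_{i_m}/\ve_m)$; since $|\phi_m|\le W\le C$ and $d(Q^m_{i_m},\partial\Omega)/\ve_m\ge\min\{10|\ln\ve_m|,\rho_m/2\}\to\infty$, interior elliptic estimates give $\tilde\phi_m\to\phi_\infty$ in $C^1_{\mathrm{loc}}(\R^n)$, where, using Lemma~\ref{lemma2}, $c^m_{ij}\to0$, $\|h_m\|_*\to0$ and $\chi_{i_m}\to1$, $\phi_\infty$ is a bounded solution of $\Delta\phi_\infty-\phi_\infty+pw^{p-1}\phi_\infty=0$ in $\R^n$. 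By the nondegeneracy of $w$ (see \cite{NW}; a bounded solution of the linearized equation necessarily decays, hence lies in $\mathrm{span}\{\partial_{x_j}w\}$), $\phi_\infty=\sum_ja_j\,\partial_{x_j}w$, and passing to the limit in $\int_{\Omega_{\ve_m}}\phi_mZ_{i_mj}=0$, using the oddness of $\partial_jw\,\partial_lw$ against a radial cut-off, forces every $a_j=0$; thus $\phi_\infty\equiv0$, contradicting $|\phi_\infty|\ge\delta_0$ at $\lim(x_m-Q^m_{i_m}/\ve_m)$.

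The second step propagates this to all of $\Omega_{\ve_m}$ by a barrier. On $\mathcal F_m:=\Omega_{\ve_m}\setminus\bigcup_iB_{R_0}(Q^m_i/\ve_m)$ one has $pw^{p-1}_{\ve_m,\mathbf Q^m}\le\epsilon_0<1-\eta^2$ (by Lemma~\ref{lemma2} and the decay of $w$, $R_0$ large), $|h_m+\sum_{l,j}c^m_{lj}Z_{lj}|\le\kappa_mW$ with $\kappa_m\to0$ (since $|Z_{lj}|\le Ce^{-\eta|x-Q_l/\ve|}$ for $|x-Q_l/\ve|\ge R_0$), and the elementary identity $\Delta e^{-\eta r}-e^{-\eta r}=\big(\eta^2-1-\tfrac{(n-1)\eta}{r}\big)e^{-\eta r}\le-(1-\eta^2)e^{-\eta r}$ makes $W$ a strict supersolution of $\Delta-(1-\epsilon_0)$ there. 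Comparing $\pm\phi_m$ with $\big(\tfrac{2\kappa_m}{1-\eta^2}+e^{\eta R_0}\|\phi_m\|_{L^\infty(\bigcup_i\partial B_{R_0}(Q^m_i/\ve_m))}\big)W$ via the maximum principle — the Neumann condition on $\partial\Omega_{\ve_m}$ being handled, as in \cite{LNW}, by adjoining to $W$ the reflected weights built from the $Q^*_i$, which is where the second condition in the definition of $\Lambda_k$ enters — gives $W^{-1}|\phi_m|\le\tfrac{2\kappa_m}{1-\eta^2}+e^{\eta R_0}\|\phi_m\|_{L^\infty(\bigcup_iB_{R_0}(Q^m_i/\ve_m))}$ on $\mathcal F_m$, while the same bound is trivial on $\bigcup_iB_{R_0}(Q^m_i/\ve_m)$ since $W\ge e^{-\eta R_0}$ there. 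By the first step the right-hand side tends to $0$, so $\|\phi_m\|_*\to0$, contradicting $\|\phi_m\|_*=1$; since all constants are independent of $k,\ve,\rho,\mathbf Q$, this proves \equ{apest1}.

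The step I expect to be the real obstacle is not this scheme — which runs parallel to \cite{LNW} — but making it uniform in $k$: keeping $\sup_{\Omega_\ve}W$ finite and the multiplier estimate \equ{cijbound} nondegenerate as $k\to\infty$. Both rest on the same geometric input as Lemma~\ref{lemma2}, namely that a fixed ball meets only $O(6^n)$ of the $\rho$-separated balls $B_\rho(Q_i/\ve)$, together with the resulting geometrically convergent tails; once that is in hand, the blow-up and the barrier arguments are insensitive to $k$. A secondary technical point, borrowed from \cite{LNW}, is the Neumann version of the barrier near $\partial\Omega_\ve$, for which the reflected configuration $Q^*_i$ built into $\Lambda_k$ is exactly what is needed.
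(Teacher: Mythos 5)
Your proposal is correct and follows essentially the same route as the paper: test against $Z_{ij}$ to control the multipliers $c_{ij}$, use the weight $W$ as a barrier for a pointwise bound away from the spikes, and close via a contradiction/blow-up argument invoking the nondegeneracy of $w$. The ordering is reversed (you establish smallness near the spikes by blow-up first and then propagate outward by the barrier, while the paper uses the barrier estimate to show $\phi^{(n)}$ must concentrate near some spike and then blows up there), but this is cosmetic; your explicit packing bound giving $\sup_{\Omega_\ve}W\le C(n,\eta)$ uniformly in $k$, and your note on the reflected weights near $\partial\Omega_\ve$, make explicit two points the paper relies on implicitly.
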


\medskip
\begin{proof}
We argue by contradiction. Assume that there exist $\phi $ solution
to \equ{e301} and
$$
\| h \|_* \to 0, \quad \| \phi \|_{*} =1.
$$
Multiplying the equation in \equ{e301} against $Z_{ij}$ and integrating in
$\Omega_\ve$, we get
$$
\int_{\Omega_\ve} L \phi Z_{ij} (x)  = \int_{\Omega_\ve}  h Z_{ij}  + c_{ij} \int_{\Omega_\ve} Z^2_{ij}.
$$
 Given the exponential decay at infinity of $\partial_{x_i} w $ and the definition of $Z_{ij} $, we get
\begin{equation}
\int_{\Omega_\ve}  Z^2_{ij} =  \int_{\R^n} w_{x_i}^2 + O(e^{-\delta_1 \rho}) , \label{dent1} \mbox{ as }\rho\to\infty,
\end{equation}
for some $\delta_1 >0$. On the other hand
$$
|\int_{\Omega_\ve}  h Z_{ij} | \leq C \| h \|_* \int_{\Omega_\ve} | w_{x_i}
(x-\frac{Q_i}{\ve}) |  e^{-\eta |x-\frac{Q_i}{\ve}|} dx\leq C \| h \|_* .
$$
Here and in what follows, $C$ stands for a positive constant
independent of $\ve$, and $\rho$, as $\ve \to 0$, $\rho\to \infty$. Now if we write
$\tilde Z_{ij} (x)=  w_{x_i} (x-\frac{Q_i}{\ve})$, we have
\begin{eqnarray}
-\int_{\Omega_\ve} L \phi Z_{ij} (x) &=& -\int_{\Omega_\ve}  \phi  (L [Z_{ij} ]) \nonumber \\
 & = &   \int_{B(\frac{Q_i}{\ve}, {\rho \over 2})} [\Delta \tilde
Z_{ij} - \tilde Z_{ij}
 + pw^{p-1}(x-\frac{Q_i}{\ve}) \tilde Z_{ij} ] \chi_i \phi \nonumber \\
\nonumber \\
&-& \int_{B(z,{\rho \over 2} )} \phi ( \tilde Z_{ij} \Delta \chi_i+ 2 \nabla \chi_i\nabla Z_{ij} ) \\
&+& p\int_{B(\frac{Q_i}{\ve}, {\rho\over 2})} (w^{p-1}_{\ve,\mathbf{Q}}-w^{p-1}(x-\frac{Q_i}{\ve})) \phi \tilde Z_{ij} \chi_i.\nonumber
\label{dent0}
\end{eqnarray}

Next we estimate all the terms in the above equation.

The first term is $0$ since
\begin{equation*}
\Delta \tilde Z_{ij} - \tilde Z_{ij}
 +pw^{p-1}(x-\frac{Q_i}{\ve}) \tilde Z_{ij}=0.
\end{equation*}

The second integral can be estimated as
follows
\begin{eqnarray*}
\left| \int_{\Omega_\ve} \phi ( \tilde Z_{ij} \Delta  \chi_i+ 2 \nabla \chi_i\nabla \tilde Z_{ij} ) \right|
&&\leq C\|\phi\|_*\int_{\frac{\rho-1}{2}}^{\frac{\rho^2}{2(\rho+1)}}e^{-(1+\eta)s}s^{\frac{-(n-1)}{2}}ds\\
&&\leq C e^{-(1+\xi) {\rho \over 2}} \| \phi \|_{*},
\end{eqnarray*}
for some $\xi >0$. Finally, we observe that in  $B(\frac{Q_i}{\ve},{\rho \over 2} )$ the following holds
$$
| w^{p-1}_{\ve,\mathbf{Q}}-w^{p-1}_{Q_i}(x)| \leq Cw_{Q_i}^{p-2}(x)
\left[ \sum_{j\neq i } w (x-\frac{Q_j}{\ve} ) \right].
$$
Thus  we obtain
\begin{eqnarray*}
\left| \int_{B(\frac{Q_i}{\ve}, {\rho \over 2})} (w^{p-1}_{\ve,\mathbf{Q}}-w^{p-1}_{Q_i}(x) ) \phi \tilde Z_{ij} \chi_i
\right|
&&
\leq  C e^{-\xi {\rho \over 2}} \| \phi \|_{*}
\end{eqnarray*}
for some  $\xi >0$, depending on $n$ and $p$. We then conclude
that
\begin{equation}\label{cijl}
| c_{ij}| \leq C \left[ e^{-\xi {\rho \over 2}} \|
\phi \|_{*} + \| h \|_* \right].
\end{equation}

\medskip
Let now $\eta \in (0,1)$. It is easy to check that the function $W$ (defined at (\ref{Wdef})) satisfies
\[
L \, W  \leq  \frac{1}{2} \,  ( \eta^2 -1) \, W \, ,
\]
in $\Omega_\ve \setminus \cup_{i=1}^k B(\frac{Q_i}{\ve} , \rho_1)$ provided $\rho_1$ is large enough but independent of $\rho$.  Hence the function $W$ can be used as a barrier to prove the pointwise estimate
\begin{equation}
|\phi | (x)  \leq  C \, \left( \|  L  \, \phi \|_*  + \sup_i \|\phi\|_{L^\infty(\partial B(\frac{Q_i}{\ve},\rho_1))} \right) \,   W (x) \, ,
\label{eq:fp2}
\end{equation}
for all $x \in \Omega_\ve \setminus \cup_{i=1}^k B(\frac{Q_i}{\ve} , \rho_1)$.

\medskip

Granted these preliminary estimates, the proof  of the result goes by contradiction. Let us assume there exist  a sequence of $\ve$ tending to $0$, $\rho$ tending to $\infty$ and a sequence of solutions of  \equ{e301}  for which the inequality is not true. The problem being linear, we can reduce to the case where we have a sequence $\ve^{(n)}$ tending to $0$, $\rho^{(n)}$ tending to $\infty$ and sequences $ h^{(n)}$, $\phi^{(n)}, \{c_{ij}^{(n)}\}$ such that
$$
\| h^{(n)} \|_* \to 0, \quad \mbox{and} \quad \| \phi^{(n)} \|_{*} =1.
$$
By (\ref{cijl}), we can get that
\[
\| \sum_{ij} c_{ij}^{(n)} Z_{ij} \|_* \to 0 \, .
\]
Then \equ{eq:fp2} implies that there exists $Q_i^{(n)} \in \Lambda_k$  such that
\begin{equation}
\|\phi^{(n)} \|_{L^\infty (B(Q_i^{(n)} ,\frac{\rho}{2}))}\geq C \, ,
\label{eq:fp3}
\end{equation}
for some fixed constant $C>0$. Using elliptic estimates together with Ascoli-Arzela's theorem, we can find a sequence $Q_i^{(n)}$ and we can extract, from the sequence $\phi^{(n)} (\cdot -\frac{Q_i^{(n)}}{\ve})$ a subsequence which will converge (on compact sets) to $\phi_\infty$ a solution of
\[
\left(\Delta - 1 + pw^{p-1} \right) \, \phi_\infty =0 \, ,
\]
in $\R^n$, which is bounded by a constant times $e^{-\eta \, |x|}$, with $\eta >0$. Moreover, recall that $\phi^{(n)}$ satisfies the orthogonality conditions in \equ{e301}. Therefore, the limit function $\phi_\infty$ also satisfies
\[
\int_{\R^n} \phi_\infty \, \nabla w \, dx =0 \, .
\]
By the nondegeneracy of solution $w$, we have  that $\phi_\infty \equiv 0$, which is certainly in contradiction with \equ{eq:fp3} which implies that $\phi_\infty$ is not identically equal to $0$.

\medskip

Having reached a contradiction, this completes the proof of the
Proposition.
\qed

\medskip
We can now prove Proposition \ref{p301}.

\medskip
\noindent {\it Proof of Proposition \ref{p301}.} Consider the space
$$
{\mathcal H} = \{ u \in H^1 (\Omega_\ve ) \, : \, \int_{\Omega_\ve} u Z_{ij} = 0 , \quad  (Q_1,\cdots,Q_k)\in \Lambda_k
\}.
$$
Notice that the problem \equ{e301} in $\phi $ gets re-written as
\begin{equation}\label{lp7}
\phi + K (\phi ) = \bar h \quad {\mbox{in}} \quad {\mathcal H}
\end{equation}
where $\bar h$ is defined by duality and $K: {\mathcal H} \to
{\mathcal H}$ is a linear compact operator. Using Fredholm's
alternative, showing that equation \equ{lp7} has a unique solution
for each $\bar h$ is equivalent to showing that the equation has a
unique solution for $\bar h = 0$, which in turn follows from
Proposition \ref{p302}. The estimate \equ{apest} follows directly from
Proposition \ref{p302}. This concludes the proof of Proposition
\ref{p301}.

\medskip
In the following, if $\phi$ is the unique solution given by
Proposition \ref{p301}, we set
\begin{equation}
\label{carmen1} \phi = {\mathcal A} (h).
\end{equation}
Estimate \equ{apest} implies
\begin{equation}\label{carmen2}
\| {\mathcal A} (h ) \|_{*} \leq C \| h \|_{*}.
\end{equation}

\setcounter{equation}{0}
\section{The non linear projected problem}\label{sec4}
For small $\ve$, large $\rho$, and fixed points $\mathbf{Q}\in \Lambda_k$, we show solvability in $\phi$, $\{c_{ij}\}$  of
the non linear projected problem

\begin{equation}\label{e401}
\left\{\begin{array}{c}
\Delta (w_{\ve,\mathbf{Q}}+\phi)-(w_{\ve,\mathbf{Q}}+\phi)+(w_{\ve,\mathbf{Q}}+\phi)^p
=\sum_{i=1,\cdots,k,j=1,\cdots,n}c_{ij} Z_{ij}  \quad {\mbox {in}} \quad \Omega_\ve \\
\\
 {\partial \phi \over \partial \nu} = 0 , \quad {\mbox {on}} \quad \partial \Omega_\ve \\
 \\
 \int_{\Omega_\ve } \phi Z_{ij} =  0 \quad
 {\mbox{ for }} i=1,\cdots,k,\ j=1,\cdots,n .
 \end{array} \right.
 \end{equation}

The first equation in (\ref{e401}) can be rewritten as
\begin{equation}\label{e402}
L (\phi ):=\Delta \phi-\phi+pw_{\ve,\mathbf{Q}}^{p-1}\phi = S_\ve(w_{\ve,\mathbf{Q}})+N(\phi) +
\sum_{i=1,\cdots,k,j=1,\cdots,n} c_{ij} Z_{ij},
\end{equation}
where
\begin{eqnarray}
&&S_\ve(w_{\ve,\mathbf{Q}})=\Delta w_{\ve,\mathbf{Q}}-w_{\ve,\mathbf{Q}}+w_{\ve,\mathbf{Q}}^p,\\
&&N(\phi)=(w_{\ve,\mathbf{Q}}+\phi)^p-w_{\ve,\mathbf{Q}}^p-pw_{\ve,\mathbf{Q}}^{p-1}\phi.
\end{eqnarray}

We have the validity of the following result:

\begin{proposition} \label{p401}
 There exist positive numbers $\ve_0$, $\rho_0$, $C$ and $\xi >0$ such that for all $\ve \leq \ve_0$, $\rho\geq \rho_0$, and for any $\mathbf{Q}\in\Lambda_k$, there is a unique solution $(\phi_{\ve,\mathbf{Q}} , \{c_{ij}\}  )$  to problem
\equ{e401}. Furthermore $\phi_{\ve,\mathbf{Q}}$ is $C^1$ in $\mathbf{Q}$ and we have
\begin{equation}
 \|\phi_{\ve,\mathbf{Q}} \|_{*} \leq C e^{-{(1+\xi)   \over 2} \, \rho }.
\label{est2}\end{equation}

\end{proposition}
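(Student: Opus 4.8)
The plan is to solve the projected problem \equ{e401} by a standard fixed-point argument built on top of the linear theory of Proposition \ref{p301}. Writing $\phi = \mathcal{A}(h)$ for the solution operator of the linear problem, equation \equ{e402} becomes the fixed-point equation
\begin{equation*}
\phi = \mathcal{T}(\phi) := \mathcal{A}\big( S_\ve(w_{\ve,\mathbf{Q}}) + N(\phi) \big),
\end{equation*}
and a solution $(\phi_{\ve,\mathbf{Q}}, \{c_{ij}\})$ is produced by showing that $\mathcal{T}$ is a contraction on the ball
\begin{equation*}
\mathcal{B} = \Big\{ \phi \in \mathcal{H} : \|\phi\|_* \leq C_0 \, e^{-\frac{(1+\xi)}{2}\rho} \Big\}
\end{equation*}
for a suitable constant $C_0$. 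By \equ{carmen2}, $\|\mathcal{T}(\phi)\|_* \leq C\big(\|S_\ve(w_{\ve,\mathbf{Q}})\|_* + \|N(\phi)\|_*\big)$, so everything reduces to two estimates, both of which must be shown uniform in $k$ and $\mathbf{Q}$.

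The first key step is to estimate the error $\|S_\ve(w_{\ve,\mathbf{Q}})\|_*$. Since each $w_{\ve,Q_i}$ solves $\Delta w_{\ve,Q_i} - w_{\ve,Q_i} + w_{Q_i}^p = 0$, one has $S_\ve(w_{\ve,\mathbf{Q}}) = \big(\sum_i w_{\ve,Q_i}\big)^p - \sum_i w_{Q_i}^p$. I would split $\Omega_\ve$ into the pieces $\Omega_{\ve,i}$ and $\Omega_{\ve,k+1}$ as in Lemma \ref{lemma2}. On $\Omega_{\ve,i}$, using \equ{in} one writes $w_{\ve,\mathbf{Q}} = w_{\ve,Q_i} + O(e^{-\rho/2})$, and the mismatch between $\varphi_{\ve,Q_i}$ (controlled by Lemma 2.1), the interaction with neighboring bumps, and the convexity estimate $|(a+b)^p - a^p - pa^{p-1}b| \le C(|b|^p + a^{p-2}b^2)$ gives a bound of order $e^{-\frac{(1+\xi)}{2}\rho}$ times the weight $W$; the geometric-series/finite-overlap device from the proof of Lemma \ref{lemma2} (at most $c_n = 6^n$ neighbors per shell) is exactly what keeps this independent of $k$. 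On $\Omega_{\ve,k+1}$ all bumps are exponentially small, so the term is negligible. The nonlinear term is handled by the elementary inequality $|N(\phi_1) - N(\phi_2)| \le C\big(\|\phi_1\|_*^{p-1} + \|\phi_2\|_*^{p-1} + \|\phi_1\|_* + \|\phi_2\|_*\big)\|\phi_1 - \phi_2\|_* \, W$ (distinguishing $p \ge 2$ from $1 < p < 2$, and again using that at each point only finitely many bumps contribute to $W$), which gives on $\mathcal{B}$ both $\|N(\phi)\|_* \le C_0' e^{-(1+\xi)\rho} \ll e^{-\frac{(1+\xi)}{2}\rho}$ and a contraction constant $o(1)$.

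Combining these, $\mathcal{T}$ maps $\mathcal{B}$ into itself and is a contraction for $\rho$ large, $\ve$ small, so the contraction mapping principle yields the unique $\phi_{\ve,\mathbf{Q}} \in \mathcal{B}$, establishing \equ{est2}; the accompanying multipliers $c_{ij}$ come along from Proposition \ref{p301}. Finally, the $C^1$-dependence of $\phi_{\ve,\mathbf{Q}}$ on $\mathbf{Q}$ follows by differentiating the fixed-point identity: formally $\partial_{\mathbf{Q}}\phi$ solves a linear problem of the same type (with the orthogonality conditions now involving $\partial_{\mathbf{Q}} Z_{ij}$, which one removes by a further projection), and the implicit function theorem applied to the $C^1$ map $(\phi, \mathbf{Q}) \mapsto \phi - \mathcal{T}_{\mathbf{Q}}(\phi)$ — whose partial derivative in $\phi$ is invertible since its norm is $o(1)$ — gives the regularity together with the bound $\|\partial_{\mathbf{Q}}\phi_{\ve,\mathbf{Q}}\|_* \le C e^{-\frac{(1+\xi)}{2}\rho}$. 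The main obstacle throughout is bookkeeping: every constant must be tracked to be independent of $k$, and this is precisely where the bounded-overlap estimate from Lemma \ref{lemma2} (rather than a naive sum over $k$ terms) is essential; the analytical content beyond that is the routine Liapunov–Schmidt machinery.
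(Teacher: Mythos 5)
Your proposal is correct and follows essentially the same route as the paper: a contraction-mapping argument for $T(\phi)=\mathcal{A}(S_\ve(w_{\ve,\mathbf{Q}})+N(\phi))$ on the ball of radius $O(e^{-(1+\xi)\rho/2})$ in $\|\cdot\|_*$, with the main work being the $k$-uniform estimates $\|S_\ve(w_{\ve,\mathbf{Q}})\|_*\lesssim e^{-(1+\xi)\rho/2}$ and $\|N(\phi)\|_*\lesssim\|\phi\|_*^2+\|\phi\|_*^p$, the latter secured by the bounded-overlap argument of Lemma~\ref{lemma2}. The only superficial divergences are that you record the explicit identity $S_\ve(w_{\ve,\mathbf{Q}})=(\sum_i w_{\ve,Q_i})^p-\sum_i w_{Q_i}^p$ (which the paper uses implicitly), you split the domain along the $\Omega_{\ve,i}$ of Lemma~\ref{lemma2} while the paper splits along $|x-Q_i/\ve|\lessgtr\rho/(2+\sigma)$ to tune $\xi$ via $\sigma$, and you sketch the $C^1$-dependence by the implicit function theorem, whereas the paper simply cites Lemma~4.1 of \cite{LNW} for that point; all of these are cosmetic and the substance matches.
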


\medskip
\noindent
{\it Proof.} The proof relies on the contraction mapping theorem in
the $\| \cdot \|_{*}$-norm introduced above. Observe that $\phi$
solves \equ{e401} if and only if
\begin{equation}\label{fixed}
\phi = {\mathcal A} \left( S_\ve(w_{\ve,\mathbf{Q}})+N(\phi ) \right)
\end{equation}
where ${\mathcal A}$ is the operator introduced in \equ{carmen1}. In
other words, $\phi$ solves \equ{e401} if and only if $\phi$ is a
fixed point for the operator
$$
T(\phi ) :={\mathcal A} \left( S_\ve(w_{\ve,\mathbf{Q}})+N(\phi ) \right).
$$
Given $r>0$, define
$$
{\mathcal B} = \{ \phi \in C^2 (\Omega_\ve ) \, : \, \| \phi \|_{*} \leq
r e^{-{(1+\xi) \over 2} \rho} , \, \int_{\Omega_\ve} \phi Z_{ij}
= 0 \}.
$$
We will prove that $T$ is a contraction mapping from ${\mathcal B}$
in itself.

To do so, we claim that
\begin{equation}
\label{estimateE} \| S_\ve(w_{\ve,\mathbf{Q}}) \|_{*} \leq C e^{-{(1+\xi ) \over 2} \rho}
\end{equation}
and
\begin{equation}
\label{estimateN} \| N(\phi ) \|_{*} \leq C \left[ \| \phi \|_{*}^2
+ \| \phi \|_{*}^p\right] ,
\end{equation}
for some fixed function $C$ independent of $\rho$ and $\ve$. We postpone the proof of the estimates above to the end of the proof of this Proposition. Assuming the validity of
\equ{estimateE} and \equ{estimateN} and taking into account
\equ{carmen2}, we have for any $\phi \in {\mathcal B}$
$$
\begin{array}{ll}
\| T( \phi ) \|_{*} \leq & C\left[ \| S_\ve(w_{\ve,\mathbf{Q}})+N(\phi ) \|_* \right] \leq
C \left[ e^{-{(1+\xi ) \over 2} \rho} + r^2 e^{-{(1+\xi )} \rho} +
r^p
e^{-{p (1+\xi ) \over 2} \rho}\right]\\
\\
&\leq r e^{-{(1+\xi ) \over 2} \rho}
\end{array}
$$
for a proper choice of $r$ in the definition of ${\mathcal B}$,
since $p>1$.

Take now $\phi_1 $ and $\phi_2$ in ${\mathcal B}$. Then it is
straightforward to show that
$$
\begin{array}{ll}
\| T(\phi_1 ) - T(\phi_2 )\|_{*} &\leq C \| N(\phi_1 ) - N(\phi_2 )
\|_* \\
\\
&\leq C \left[ \| \phi_1 \|_*^{\min (1, p-1)} +  \| \phi_2
\|_*^{\min (1, p-1)} \right] \, \| \phi_1  - \phi_2  \|_* \\
\\
&\leq \frac{1}{2}
\| \phi_1  - \phi_2  \|_*.
\end{array}
$$
This means that $T$ is a contraction mapping from ${\mathcal B}$
into itself.

To conclude the proof of this Proposition we are left to show the
validity of \equ{estimateE} and \equ{estimateN}. We start with
\equ{estimateE}.

Fix $Q_i\in \Lambda_k$ and consider the region $|x-\frac{Q_i}{\ve} | \leq {\rho \over
2+\sigma} $, where  $\sigma$ is a small positive number to be chosen
later. In this region the error $S_\ve(w_{\ve,\mathbf{Q}})$ can be estimated in the following way
\begin{eqnarray}
|S_\ve(w_{\ve,\mathbf{Q}})| &\leq & C \left[ w^{p-1} (x-\frac{Q_i}{\ve} ) \sum_{j\neq i}
w (x-\frac{Q_i}{\ve}) + \sum_{j\neq i } w^p (x-\frac{Q_j}{\ve} ) \right] \nonumber\\
\nonumber
\\
&\leq & C w^{p-1} (x-\frac{Q_i}{\ve} ) e^{-({1\over 2}
+ {\sigma \over 2 (2+\sigma)} ) \rho} \nonumber \\
\nonumber \\
&\leq & C w^{p-1} (x-\frac{Q_i}{\ve}) e^{-({1\over 2} + {\sigma \over 4
(2+\sigma)} ) \rho} \,  e^{- {\sigma \over 4 (2+\sigma)} \rho}
\nonumber \\
\nonumber \\
&\leq & C w^{p-1} (x-\frac{Q_i}{\ve} ) e^{-{1+\xi \over 2} \rho} \label{EE1}
\end{eqnarray}
for a proper choice of $\xi >0$.

Consider now the region $|x-\frac{Q_i}{\ve} | > {\rho \over 2+\sigma}$, for
all $i $. Since $0< \mu < p-1$, we write $\mu = p-1-M$. From
the definition of $S_\ve(w_{\ve,\mathbf{Q}})$, we get in the region under consideration
\begin{eqnarray}\label{EE2}
|S_\ve(w_{\ve,\mathbf{Q}}) |
&\leq & C \left[ \sum_{j } w^p (x-\frac{Q_j}{\ve} ) \right]
\leq C \left[ \sum_{j } e^{-\mu |x-\frac{Q_j}{\ve} |}  \right] e^{-(p-\mu)
{\rho \over 2+\sigma} }\nonumber \\
&\leq & \left[ \sum_{j} e^{-\mu |x-\frac{Q_j}{\ve}|}  \right] e^{-{1+M
\over 2+\sigma} \rho } \nonumber\\
&\leq & \left[ \sum_{j} e^{-\mu |x-\frac{Q_j}{\ve}
|}  \right] e^{-{1+\xi \over 2} \rho  }
\end{eqnarray}
for some $\xi >0$, if we chose $M$ and $\sigma$ small enough. From
\equ{EE1} and \equ{EE2} we get \equ{estimateE}.

We now prove \equ{estimateN}. Let $\phi \in {\mathcal B}$. Then
\begin{equation}
\label{enne} |N(\phi )| \leq | ( w_{\ve,\mathbf{Q}}+\phi )^p - w_{\ve,\mathbf{Q}}^p - p w_{\ve,\mathbf{Q}}^{p-1} \phi |
\leq C (\phi^2 + |\phi|^p ).
\end{equation}
Thus we have
$$
\begin{array}{ll}
| (\sum_{j} e^{-\eta |x-\frac{Q_j}{\ve}|)^{-1}  } N(\phi ) | & \leq C \| \phi \|_*
\left( |\phi| + |\phi|^{p-1} \right)\\
\\
&\leq C ( \| \phi \|_*^2 + \| \phi \|_*^p ).
\end{array}
$$
This gives \equ{estimateN}.

For the $C^1$ regularity of $\phi_{\ve,\mathbf{Q}}$, see Lemma 4.1 in \cite{LNW}. This concludes the proof of the Proposition.
\end{proof}

\section{An improved estimate}

In this section, we present a key estimate on the difference between the solutions in the $k-$th step and $ (k+1)-$th step.

For $(Q_1,\cdots,Q_{k})\in \Lambda_{k}$,  we  denote $u_{\ve, Q_1, \cdots, Q_k}$ as $ w_{\ve, Q_1,..., Q_k}+ \phi_{\ve, Q_1, ..., Q_k}$, where $ \phi_{\ve, Q_1, \cdots, Q_k}$ is the unique solution given by Proposition \ref{p401}. The estimate below says that the difference between $ u_{\ve, Q_1, \cdots, Q_{k+1}}$ and $ u_{\ve, Q_1, \cdots, Q_k}+ u_{\ve, Q_{k+1}}$ is small globally in $H^1 (\Omega_\ve)$ norm.

We now write
 \begin{eqnarray}
 u_{\ve,Q_1,\cdots,Q_{k+1}}&=&u_{\ve,Q_1,\cdots,Q_k}+u_{\ve,Q_{k+1}}+\varphi_{k+1}\\
 &=&\bar{W}+\varphi_{k+1}\nonumber,
 \end{eqnarray}
where
$$\bar{W}= u_{\ve,Q_1,\cdots,Q_k}+u_{\ve,Q_{k+1}}.$$

By Proposition \ref{p401}, we can easily derive that
\begin{equation}
\label{vark100}
 \|\varphi_{k+1} \|_{*} \leq C e^{-{(1+\xi)   \over 2} \, \rho }.
\end{equation}

However the estimate (\ref{vark100}) is not good enough.   We need the following key estimate for $\varphi_{k+1}$:

 \begin{lemma}\label{lemma501}
 Let $\rho$, $\ve$ be as in Proposition \ref{p401}. Then it holds
 \begin{equation}
\label{keyvar}
 \int_{\Omega_\ve} (|\nabla \varphi_{k+1}|^2 + \varphi_{k+1}^2 ) \leq C e^{-(1+\xi) \rho},
 \end{equation}
 for some constant $C>0,\xi>0$ independent of $\ve,\rho ,k$ and $\mathbf{Q}\in \Lambda_{k+1}$.
 \end{lemma}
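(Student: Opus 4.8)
The plan is to estimate $\varphi_{k+1}$ via the equation it satisfies, exploiting the fact that both $u_{\ve,Q_1,\cdots,Q_{k+1}}$ and $\bar W = u_{\ve,Q_1,\cdots,Q_k}+u_{\ve,Q_{k+1}}$ are (projected) approximate solutions built from the same Liapunov--Schmidt scheme. First I would subtract the two projected equations from Proposition \ref{p401} to obtain a linear equation for $\varphi_{k+1}$ of the form $L_{k+1}(\varphi_{k+1}) = E + N + \sum c_{ij}Z_{ij} - \sum \tilde c_{ij}\tilde Z_{ij}$, where $E$ collects the ``interaction error'' $S_\ve(\bar W)$ minus the errors already cancelled in the $k$-th and single-spike problems, and $N$ is the quadratic remainder. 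The point is that $E$ is supported, up to exponentially small tails, in the overlap regions between the $(k+1)$-th spike and the first $k$ spikes, and there the pointwise bound from Lemma \ref{lemma2} gives $|S_\ve(\bar W)| \le C\, w^{p-1}(x-\tfrac{Q_{k+1}}{\ve})\sum_{j\le k} w(x-\tfrac{Q_j}{\ve}) + \cdots$. Testing the equation against $\varphi_{k+1}$ itself and using the orthogonality conditions $\int \varphi_{k+1}Z_{ij}=0$ (which kill the $c_{ij}$ terms after controlling the $c_{ij}$ as in \eqref{cijl}), one gets $\|\varphi_{k+1}\|_{H^1}^2 \le C\big(\|E\|_{L^2_{\mathrm{loc}}}\|\varphi_{k+1}\|_{H^1} + \|N\|\,\|\varphi_{k+1}\|\big)$, hence $\|\varphi_{k+1}\|_{H^1} \le C\|E\|_{(H^1)'}$, and the structure of $E$ forces $\|E\|^2 \lesssim e^{-(1+\xi)\rho}$.

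The coercivity of $L_{k+1}$ on the space orthogonal to all the $Z_{ij}$ is exactly the content of Proposition \ref{p302} (reread in $H^1$ rather than in the weighted $L^\infty$ norm); alternatively one can pass through the $\|\cdot\|_*$ bound \eqref{carmen2} and then upgrade to $H^1$ by elliptic regularity and the exponential localization of $W$, noting $\int_{\Omega_\ve} W^2 \le Ck \le C\delta\ve^{-n}$ is \emph{not} what we want — so the $H^1$ estimate must come from the local structure of the right-hand side, not from a global $\|\cdot\|_*$-to-$L^2$ conversion. Concretely, the key computation is
\begin{equation*}
\int_{\Omega_\ve} \Big( w^{p-1}\big(x-\tfrac{Q_{k+1}}{\ve}\big)\sum_{j\le k} w\big(x-\tfrac{Q_j}{\ve}\big)\Big)^2 dx \le C \sum_{j\le k} e^{-(2+2\xi')|Q_{k+1}-Q_j|/\ve} \le C e^{-(1+\xi)\rho},
\end{equation*}
where the last step uses the finite-overlap bound ($c_n = 6^n$ neighboring balls) exactly as in the proof of Lemma \ref{lemma2} to sum the geometric series, and the gain of a full factor (rather than half) comes from squaring. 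One then checks $\|N\|$ is of higher order by \eqref{estimateN} and \eqref{vark100}.

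The main obstacle, and presumably the reason the authors announce a ``secondary Liapunov--Schmidt reduction'' in the introduction, is that the naive estimate above only controls $\varphi_{k+1}$ \emph{in the weighted norm}, and in the weighted norm the best one gets is the square root of what is claimed; the genuine $e^{-(1+\xi)\rho}$ bound on the \emph{energy} requires decomposing $\varphi_{k+1}$ itself into a part living near the first $k$ spikes and a part near $Q_{k+1}$, solving a further projected linear problem for each piece, and tracking cancellations so that the cross terms do not lose the extra factor. So after the above ``first pass'' I would set $\varphi_{k+1} = \varphi_{k+1}^{(1)} + \varphi_{k+1}^{(2)} + \psi$ adapted to the two clusters, derive coupled equations, apply Proposition \ref{p301} to each cluster separately (where the relevant interaction error is genuinely $O(e^{-(1+\xi)\rho/2})$ in $\|\cdot\|_*$ but localized), and then recombine: the $H^1$-norm of each localized piece against a localized right-hand side does convert correctly because the relevant weight is integrable over a bounded number of balls. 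The bookkeeping of which error terms have already been absorbed at stage $k$ versus stage $k+1$ — i.e. that $S_\ve(\bar W) = S_\ve(u_{\ve,Q_1,\cdots,Q_k}) + S_\ve(u_{\ve,Q_{k+1}}) + (\text{interaction})$ and the first two are themselves $O(e^{-(1+\xi)\rho/2})$ but orthogonal-projected away — is the delicate part and is where I would spend the most care.
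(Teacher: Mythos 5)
Your overall strategy — write down the equation satisfied by $\varphi_{k+1}$ with right-hand side $\bar S=(u_{\ve,Q_1,\dots,Q_k}+u_{\ve,Q_{k+1}})^p-u_{\ve,Q_1,\dots,Q_k}^p-u_{\ve,Q_{k+1}}^p$ plus Lagrange multipliers, estimate $\|\bar S\|_{L^2}\le Ce^{-(1+\xi)\rho/2}$ via the finite-overlap counting of Lemma \ref{lemma2}, and then test against $\varphi_{k+1}$ — is the right first step and matches the paper. You also correctly sense that a further reduction is needed. But there are two genuine errors in the reasoning you supply to close the argument.

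First, the claim that ``the coercivity of $L_{k+1}$ on the space orthogonal to all the $Z_{ij}$ is exactly the content of Proposition \ref{p302} reread in $H^1$'' is false. Proposition \ref{p302} is an injectivity/a priori bound in the weighted $L^\infty$ norm proved by contradiction and nondegeneracy of $w$; it is not an energy estimate, and no version of it gives $\int(-L\psi)\psi\ge c_0\|\psi\|_{H^1}^2$ on $\{\psi:\int\psi Z_{ij}=0\}$. In fact that coercivity is \emph{false}: the single-bump linearized operator $\Delta-1+pw^{p-1}$ has a positive principal eigenvalue $\lambda_1>0$ with even eigenfunction $\phi_0$, so $\int(-L\phi_0)\phi_0=-\lambda_1\int\phi_0^2<0$ while $\phi_0\perp\nabla w$. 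So testing the equation against $\varphi_{k+1}$ on the $Z_{ij}$-orthogonal complement cannot, by itself, control $\|\varphi_{k+1}\|_{H^1}$, and this is the actual obstruction, not a ``square-root loss.''

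Second, the secondary reduction you propose — splitting $\varphi_{k+1}$ spatially into a piece near $Q_{k+1}$ and a piece near the other $k$ spikes and running Proposition \ref{p301} on each cluster — does not fix the sign problem, since the indefiniteness persists locally near every spike. What the paper does instead is a \emph{spectral} secondary decomposition: it writes $\varphi_{k+1}=\psi+\sum_{i}c_i\phi_i+\sum_{ij}d_{ij}Z_{ij}$, where $\phi_i=\chi_i\,\phi_0(\,\cdot-Q_i/\ve\,)$ are localized copies of the principal eigenfunction, and requires $\int\psi\,\bar L\phi_i=\int\psi Z_{ij}=0$. Only after projecting out the $\phi_i$ directions (in addition to the $Z_{ij}$) does one get the $H^1$-coercivity $\int(-\bar L\psi)\psi\ge c_0\|\psi\|_{H^1}^2$, proved by a separate compactness argument using nondegeneracy together with $\int\psi_\infty\phi_0=0$. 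The coefficients $d_{ij}=\int\varphi_{k+1}Z_{ij}$ are then computed \emph{exactly} from the orthogonality conditions already known for $\phi_{\ve,Q_1,\dots,Q_k}$, $\phi_{\ve,Q_{k+1}}$, and $\phi_{\ve,Q_1,\dots,Q_{k+1}}$ (giving the strong bounds (5.8)), the $c_i$ are estimated by testing against $\phi_i$, and $\psi$ is estimated by testing against $\psi$ using the coercivity. Without the $\phi_i$ projection your energy argument does not close.
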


 \begin{proof}
To prove (\ref{keyvar}), we need to perform a secondary decomposition.

 We first recall the following fact: it is well-known that the principal eigenfunction $\phi_0$ of the following linearized operator:
\begin{equation}
\Delta \phi-\phi+p w^{p-1}\phi=\lambda_1\phi
\end{equation}
is even and exponentially decaying, where $\lambda_1$ is the first eigenvalue. We fix $ \phi_0$ such that $ \max_{ y \in \R^n} \phi_0 =1$. Denote by $\phi_i=\chi_i\phi_0 (x-\frac{Q_i}{\ve})$, where $ \chi_i$ is the cut-off function introduced in Section 3.

By the equations satisfied by $\varphi_{k+1}$, we have
\begin{equation}\label{varphi}
\bar{L}\varphi_{k+1}=\bar{S}+\sum_{i=1,\cdots,k+1, j=1,\cdots,n}c_{ij}Z_{ij}
\end{equation}
for some constants $\{c_{ij}\}$, where
\begin{equation*}
\bar{L}=\Delta-1+p\tilde{W}^{p-1},
\end{equation*}
\begin{equation*}
\tilde{W}^{p-1}=\left\{\begin{array}{l}
\frac{ (\bar{W} + \varphi_{k+1})^p- \bar{W}^{p}}{ p \varphi_{k+1}}, \ \mbox{if} \ \varphi_{k+1} \not =0\\
\bar{W}^{p-1}, \ \mbox{if} \ \varphi_{k+1}=0,
\end{array}
\right.
\end{equation*}
and
\begin{equation*}
\bar{S}=(u_{\ve,Q_1,\cdots,Q_{k}}+u_{\ve,Q_{k+1}})^p-u_{\ve,Q_1,\cdots,Q_k}^p-u_{\ve,Q_{k+1}}^p.
\end{equation*}

The $L^2$-norm of $\bar{S}$ is estimated first: Observe that
\begin{eqnarray*}
&& |\bar{S}| = |(u_{\ve,Q_1,\cdots,Q_{k}}+u_{\ve,Q_{k+1}})^p-u_{\ve,Q_1,\cdots,Q_k}^p-u_{\ve,Q_{k+1}}^p|\\
&&\leq C(p|u_{\ve,Q_1,\cdots,Q_{k}}|^{p-1}u_{\ve,Q_{k+1}}+p|u_{\ve,Q_{k+1}}|^{p-1}u_{\ve,Q_1,\cdots,Q_k}).
\end{eqnarray*}
By the estimate in Proposition \ref{p401}, we have the following estimate of the first term above
\begin{eqnarray*}
&&\int_{\Omega_\ve}|u_{\ve,Q_1,\cdots,Q_{k}}|^{2(p-1)}u^2_{\ve,Q_{k+1}}dx\\
&&\leq C\int_{\Omega_\ve}w_{\ve,Q_1,\cdots,Q_k}^{2(p-1)}w_{\ve,Q_{k+1}}^2dx+O(e^{-(1+\xi)\rho})\\
&&\leq Ce^{-(1+\xi)\rho}.
\end{eqnarray*}
The second term can be estimated similarly. So we have
\begin{equation}\label{s}
\|\bar{S}\|_{L^2(\Omega_\ve)}\leq ce^{-(1+\xi)\frac{\rho}{2}}.
\end{equation}

By the estimate (\ref{vark100}), we have  the following estimate
\begin{equation}
\label{W123}
 \tilde{W}= \sum_{i=1}^{k+1} w(x-\frac{Q_i}{\epsilon}) + O(e^{- (1+\xi)\frac{\rho}{2}}).
\end{equation}

Decompose $\varphi_{k+1}$ as
\begin{equation}\label{decom}
\varphi_{k+1}=\psi+\sum_{i=1}^{k+1}c_i\phi_i
+\sum_{i=1,\cdots,k+1,j=1,\cdots,n}d_{ij}Z_{ij}
\end{equation}
for some $c_i,d_{ij}$ such that
\begin{equation}
\label{345}
\int_{\Omega_\ve} \psi \bar{L}\phi_idx=\int_{\Omega_\ve}\psi Z_{ij}dx=0,\ i=1,..., k, \ j=1,..., n.
\end{equation}

Since
\begin{equation}
\varphi_{k+1}=\phi_{\ve,Q_1,\cdots,Q_{k+1}}-\phi_{\ve,Q_1,\cdots,Q_k}-\phi_{\ve,Q_{k+1}},
\end{equation}

we have for $i=1,\cdots,k$,
\begin{eqnarray*}
d_{ij}&=&\int_{\Omega_\ve} \varphi_{k+1}Z_{ij}\\
&=&\int_{\Omega_\ve}(\phi_{\ve,Q_1,\cdots,Q_{k+1}}-\phi_{\ve,Q_1,\cdots,Q_k}-\phi_{\ve,Q_{k+1}})Z_{ij}\\
&=&-\int_{\Omega_\ve} \phi_{\ve,Q_{k+1}}Z_{ij}
\end{eqnarray*}
and
\begin{eqnarray*}
d_{k+1,j}&=&\int_{\Omega_\ve} \varphi_{k+1}Z_{k+1,j}\\
&=&\int_{\Omega_\ve}(\phi_{\ve,Q_1,\cdots,Q_{k+1}}-\phi_{\ve,Q_1,\cdots,Q_k}-\phi_{\ve,Q_{k+1}})Z_{k+1,j}\\
&=&-\int_{\Omega_\ve} \phi_{\ve,Q_1,\cdots,Q_k}Z_{k+1,j},
\end{eqnarray*}
where we use the orthogonality conditions satisfied by $\phi_{\ve,Q_1,\cdots,Q_k}$ and $\phi_{\ve,Q_{k+1}}$.
So by Proposition \ref{p401}, we have
\begin{equation}\label{d}
\left\{\begin{array}{ll}
|d_{ij}|\leq ce^{-(1+\xi)\frac{\rho}{2}}e^{-\eta\frac{|Q_i-Q_{k+1}|}{\ve}} \mbox{ for } i=1,\cdots,k\\
|d_{k+1,j}|\leq ce^{-(1+\xi)\frac{\rho}{2}}\sum_{i=1}^ke^{-\eta\frac{|Q_i-Q_{k+1}|}{\ve}}
\end{array}
\right.
\end{equation}
for some $\eta>0$.

By (\ref{decom}), we can rewrite (\ref{varphi}) as
\begin{equation}\label{decom1}
\bar{L}\psi+\sum_{i=1}^{k+1} c_i\bar{L}\phi_i+\sum_{i=1,\cdots,k+1,j=1,\cdots,n}d_{ij}\bar{L}Z_{ij}
=\bar{S}+\sum_{i=1,\cdots,k+1,j=1,\cdots,n}c_{ij}Z_{ij}.
\end{equation}

To obtain the estimates for the coefficients $c_i$ , we use the equation (\ref{decom1}).

First, multiplying (\ref{decom1}) by $\phi_i$ and integrating over $\Omega_\ve$, we have
\begin{eqnarray}
\label{W345}
c_i\int_{\Omega_\ve} \bar{L}(\phi_i)\phi_i=-\sum_{j=1}^n d_{ij}\int_{\Omega_\ve} \bar{L}(Z_{ij})\phi_i+\int_{\Omega_\ve}\bar{S}\phi_i
\end{eqnarray}
where
\begin{equation}
\left\{\begin{array}{ll}
\label{W346}
|\int_{\Omega_\ve}\bar{S}\phi_i|\leq ce^{-(1+\xi)\frac{\rho}{2}} e^{-\eta\frac{|Q_i-Q_{k+1}|}{\ve}} \mbox{ for }i=1,\cdots,k\\
|\int_{\Omega_\ve}\bar{S}\phi_{k+1}|\leq ce^{-(1+\xi)\frac{\rho}{2}}\sum_{i=1}^k e^{-\eta\frac{|Q_i-Q_{k+1}|}{\ve}}.
\end{array}
\right.
\end{equation}

 From (\ref{W123}) we see that
\begin{equation}
\label{W234}
\int_{\Omega_\ve}\bar{L}(\phi_i)\phi_i =  - \lambda_1 \int_{\R^n} \phi_0^2 + O(e^{-(1+\xi)\frac{\rho}{2}}).
\end{equation}

Combining (\ref{d}) and (\ref{W345})-(\ref{W234}), we have
\begin{equation}\label{c}
\left\{\begin{array}{ll}
|c_i|\leq  ce^{-(1+\xi)\frac{\rho}{2}} e^{-\eta\frac{|Q_i-Q_{k+1}|}{\ve}}, \ i=1,..., k\\
|c_{k+1}|\leq  ce^{-(1+\xi)\frac{\rho}{2}}\sum_{i=1}^k e^{-\eta\frac{|Q_i-Q_{k+1}|}{\ve}}.
\end{array}
\right.
\end{equation}

%Similarly, by multiplying (\ref{decom1}) by $Z_{ij}$ and interating over $\Omega_\ve$, we obtain
%\begin{equation}\label{cij}
%|c_{ij}|\leq  ce^{-(1+\xi)\frac{\rho}{2}} e^{-\eta\frac{|Q_i-Q_{k+1}|}{\ve}}, \ i=1,..., k.
%\end{equation}

Next let us estimate $\psi$. Multiplying (\ref{decom1}) by $\psi$ and integrating over $\Omega_\ve$, we find
\begin{eqnarray}\label{psi}
\int_{\Omega_\ve} \bar{L}(\psi)\psi=\int_{\Omega_\ve}\bar{S} \psi-\sum_{i=1,\cdots,k+1,j=1,\cdots,n}d_{ij}\int_{\Omega_\ve} \bar{L}(Z_{ij})\psi.
\end{eqnarray}
We claim that
\begin{equation}
\int_{\Omega_\ve} [-\bar{L}(\psi)\psi] \geq c_0 \|\psi\|^2_{H^1(\Omega_\ve)}
\end{equation}
for some constant $c_0>0$.

Since the approximate solution is exponentially decaying away from the points $\frac{Q_i}{\ve}$, we have
\begin{equation}
\int_{\Omega_\ve \backslash  \cup_i B_{\frac{\rho-1}{2}}(\frac{Q_i}{\ve})}\bar{L}(\psi)\psi\geq \frac{1}{2}
\int_{\Omega_\ve \backslash \cup_i B_{\frac{\rho-1}{2}}(\frac{Q_i}{\ve})}|\nabla \psi|^2+|\psi|^2.
\end{equation}
Now we only need to prove the above estimates in the domain $\cup_i B_{\frac{\rho-1}{2}}(\frac{Q_i}{\ve})$. We prove it by contradiction. Otherwise, there exists a sequence $\rho_n\to +\infty$, and $Q_i^{(n)}$ such that
\begin{eqnarray*}
\int_{B_{\frac{\rho_n-1}{2}}(\frac{Q_i^{(n)}}{\ve})}|\nabla \psi_n|^2+|\psi_n|^2=1,\ \int_{B_{\frac{\rho_n-1}{2}}(\frac{Q_i^{(n)}}{\ve})}\bar{L}(\psi_n)\psi_n\to 0,\mbox{ as } n\to \infty.
\end{eqnarray*}
Then we can extract from the sequence $\psi_n(\cdot-\frac{Q_i^{(n)}}{\ve})$ a subsequence which will converge weakly in $H^1(\R^n)$ to $\psi_\infty$, such that
\begin{equation}\label{phi1}
\int_{\R^n}|\nabla \psi_\infty|^2+|\psi_\infty|^2-pw^{p-1}\psi_\infty^2=0,
\end{equation}
and
\begin{equation}\label{phi2}
\int_{\R^n} \psi_\infty \phi_0=\int_{\R^n} \psi_\infty \frac{\partial w}{\partial x_i}=0, \mbox{ for }i=1,\cdots,n.
\end{equation}
From (\ref{phi1}) and (\ref{phi2}), we deduce that $\psi_\infty=0$.

Hence
\begin{equation}
\psi_n\rightharpoonup 0 \mbox{ weakly } \mbox{ in } H^1(\R^n).
\end{equation}
So
\begin{equation}
\int_{B_{\frac{\rho_n-1}{2}}(\frac{Q_i^{(n)}}{\ve})} p\tilde{W}^{p-1}\psi_n^2\to 0 \mbox{ as }n\to \infty.
\end{equation}
We have
\begin{equation}
\|\psi_n\|_{H^1(B_{\frac{\rho_n-1}{2}})}\to 0 \mbox{ as }n\to \infty.
\end{equation}
This contradicts the assumption
\begin{equation}
\|\psi_n\|_{H^1}=1.
\end{equation}

So we get that
\begin{equation}\label{psi1}
\int_{\Omega_\ve} [- \bar{L}(\psi)\psi ] \geq c_0 \|\psi\|^2_{H^1(\Omega_\ve)}.
\end{equation}

From (\ref{psi}) and (\ref{psi1}), we get
\begin{eqnarray}
\|\psi\|^2_{H^1 (\Omega_\epsilon) }&\leq& c(\sum_{ij}|d_{ij} | |\int_{\Omega_\epsilon} \bar{L}(Z_{ij}) \psi | +|\int_{\Omega_\epsilon} \bar{S} \psi | ) \\
&\leq& c(\sum_{ij}|d_{ij}|\|\psi\|_{H^1 (\Omega_\epsilon) }+\|\bar{S}\|_{L^2 (\Omega_\epsilon) }\|\psi\|_{H^1 (\Omega_\epsilon)}).
\end{eqnarray}
So
\begin{eqnarray}\label{psi2}
\|\psi\|_{H^1 (\Omega_\epsilon) }
&\leq& c(\sum_{ij}|d_{ij}| +\|\bar{S}\|_{L^2 (\Omega_\epsilon)}).
\end{eqnarray}
From (\ref{c}) (\ref{d}) (\ref{s}) and (\ref{psi2}), we get that
\begin{eqnarray}
\|\varphi_{k+1}\|_{H^1 (\Omega_\epsilon) }&\leq& c(e^{-\frac{\rho}{2}(1+\xi)}+\|\bar{S}\|_{L^2})\\
&\leq& ce^{-\frac{\rho}{2}(1+\xi)}.
\end{eqnarray}

\end{proof}

\section{The Reduced Problem: A Maximization Procedure}
In this section, we study a maximization problem. Fix $\mathbf{Q}\in \Lambda_k$, we define a new functional
\begin{equation}
\mathcal{M}_\ve(\mathbf{Q})=J_\ve(u_{\ve,\mathbf{Q}})=J_\ve[w_{\ve,\mathbf{Q}}+\phi_{\ve,\mathbf{Q}}]: \Lambda_k \rightarrow \R.
\end{equation}
Define
\begin{equation}
C_k^\ve=\max_{\mathbf{Q}\in\Lambda_k}\{\mathcal{M}_\ve(\mathbf{Q})\}.
\end{equation}
Since $\mathcal{M}_\ve(\mathbf{Q})$ is continuous in $\mathbf{Q}$, the maximization problem has a solution. Let $\mathcal{M}_\ve(\bar{\mathbf{Q}})$ be the maximum where $\bar{\mathbf{Q}}=(\bar{Q}_1,\cdots,\bar{Q}_k)\in \bar{\Lambda}_k$, that is
 \begin{equation}
 \mathcal{M}_\ve(\bar{Q}_1,\cdots,\bar{Q}_k)=\max_{\mathbf{Q}\in \Lambda_k}\mathcal{M}_\ve(\mathbf{Q}),
 \end{equation}
 and we denote the solution by $u_{\ve,\bar{Q}_1,\cdots,\bar{Q}_k}$.

A consequence of Lemma \ref{lemma501} is the following:
\begin{proposition}\label{p501}
Suppose that $  k < \frac{\delta}{\epsilon^n}$ where $\delta$ is sufficiently small (but independent of $\epsilon$). Then it holds
\begin{equation}\label{e501}
C_{k+1}^\ve>C_k^\ve+I(w)-\frac{\gamma}{4}e^{-\rho},
\end{equation}
where $I(w)$ is the energy of $w$,
\begin{equation}
I(w)=\frac{1}{2}\int_{\R^n}(|\nabla w|^2+w^2)-\frac{1}{p+1}\int_{\R^n}w^{p+1}.
\end{equation}
and $\gamma >0$ is defined at (\ref{gammadef}).
\end{proposition}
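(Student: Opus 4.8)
The strategy is to estimate $C_{k+1}^\ve = \max_{\mathbf{Q}\in\Lambda_{k+1}}\mathcal{M}_\ve(\mathbf{Q})$ from below by evaluating $\mathcal{M}_\ve$ at a cleverly chosen test configuration: take the maximizer $\bar{\mathbf{Q}}=(\bar Q_1,\dots,\bar Q_k)\in\Lambda_k$ of the $k$-th problem, and append one more point $Q_{k+1}$ chosen deep in the interior of $\Omega$, at distance exactly $\rho\ve$ from the nearest of the $\bar Q_i$ (this is possible for $k<\delta/\ve^n$ with $\delta$ small, since a point that far from all others and from $\partial\Omega$ still exists inside $\Omega$; this is exactly where the upper bound on $k$ enters). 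Then $C_{k+1}^\ve\geq \mathcal{M}_\ve(\bar Q_1,\dots,\bar Q_k,Q_{k+1})=J_\ve(u_{\ve,\bar Q_1,\dots,\bar Q_k,Q_{k+1}})$, and it suffices to show
\[
J_\ve(u_{\ve,\bar Q_1,\dots,\bar Q_k,Q_{k+1}}) > J_\ve(u_{\ve,\bar Q_1,\dots,\bar Q_k}) + I(w) - \frac{\gamma}{4}e^{-\rho}.
\]

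First I would expand the left-hand side using the decomposition $u_{\ve,\bar Q_1,\dots,Q_{k+1}}=\bar W+\varphi_{k+1}$ with $\bar W = u_{\ve,\bar Q_1,\dots,\bar Q_k}+u_{\ve,Q_{k+1}}$, where $u_{\ve,Q_{k+1}}=w_{\ve,Q_{k+1}}+\phi_{\ve,Q_{k+1}}$. A Taylor expansion of $J_\ve$ around $u_{\ve,\bar Q_1,\dots,\bar Q_k}+u_{\ve,Q_{k+1}}$ in the direction $\varphi_{k+1}$ gives
\[
J_\ve(\bar W+\varphi_{k+1}) = J_\ve(\bar W) + \langle J_\ve'(\bar W),\varphi_{k+1}\rangle + O\big(\|\varphi_{k+1}\|_{H^1(\Omega_\ve)}^2\big),
\]
and by Lemma \ref{lemma501} the quadratic remainder is $O(e^{-(1+\xi)\rho})$, while the linear term is controlled by $\|S_\ve(\bar W)\|_{*}\,\|\varphi_{k+1}\|_{H^1}$, again $O(e^{-(1+\xi)\rho})$ after using the error estimate from Proposition \ref{p401} together with Lemma \ref{lemma501}. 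So it remains to analyze $J_\ve(\bar W)$. Here I would further split: $J_\ve(u_{\ve,\bar Q_1,\dots,\bar Q_k}+u_{\ve,Q_{k+1}})$ decomposes as $J_\ve(u_{\ve,\bar Q_1,\dots,\bar Q_k}) + J_\ve(u_{\ve,Q_{k+1}})$ plus cross terms. The cross terms are quadratic and $p$-th order interaction integrals between the bump at $Q_{k+1}$ and the bumps at the $\bar Q_i$; since $Q_{k+1}$ is at distance $\rho\ve$ from the nearest $\bar Q_i$ and the others are at least that far, Lemma \ref{lemma2}, Lemma \ref{lemma3} and the summability argument (the $c_n=6^n$ packing bound) show these total at most $O(e^{-(1+\xi)\rho})$ — strictly smaller in absolute value than $\tfrac{\gamma}{4}e^{-\rho}$ for $\rho$ large. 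Finally $J_\ve(u_{\ve,Q_{k+1}})$: since $d(Q_{k+1},\partial\Omega)$ can be taken $\gg \ve|\ln\ve|$ (far in the interior), $\phi_{\ve,Q_{k+1}}$ is negligible and $u_{\ve,Q_{k+1}}$ is a tiny perturbation of $w(\cdot-Q_{k+1}/\ve)$ restricted to $\Omega_\ve$, whence $J_\ve(u_{\ve,Q_{k+1}}) = I(w) + (\text{boundary correction})$, and the boundary correction is $O(e^{-2d(Q_{k+1},\partial\Omega)/\ve})$ which is super-exponentially small compared to $e^{-\rho}$. Collecting all error terms, each is $o(e^{-\rho})$, so they are together bounded by $\tfrac{\gamma}{4}e^{-\rho}$ for $\rho$ large, giving the strict inequality.

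The main obstacle is the bookkeeping of the interaction (cross) terms in $J_\ve(\bar W)$ and, more subtly, ensuring that these interactions together with the $\varphi_{k+1}$-corrections are controlled \emph{uniformly in $k$}: because there are $k\sim\ve^{-n}$ bumps, one must verify that summing the pairwise interactions of the new bump with all old ones does not produce a factor growing in $k$. This is precisely handled by the geometric-series/packing estimate already established in Lemma \ref{lemma2} (at most $c_n^j$ bumps at distance $\sim j\rho\ve$), which makes $\sum_{i=1}^k e^{-\eta|Q_i-Q_{k+1}|/\ve}$ bounded independently of $k$ for $\rho$ large. A secondary technical point is justifying the Taylor expansion of $J_\ve$ with remainder controlled in $\|\cdot\|_{H^1(\Omega_\ve)}$ rather than in the weighted $\|\cdot\|_*$ norm — this is exactly why Lemma \ref{lemma501} was proved in the $H^1$ norm, and it is the place where the secondary Liapunov–Schmidt reduction pays off. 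Once these uniformities are in hand, the choice of the single constant $\tfrac{\gamma}{4}$ (with $\gamma>0$ from Lemma \ref{lemma3}) absorbing all lower-order errors for $\rho\geq\rho_0$ completes the argument.
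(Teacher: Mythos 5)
Your overall strategy (append a new spike to the optimal $k$-configuration, then show the energy at the $(k+1)$-configuration is within $o(e^{-\rho})$ of $C_k^\ve+I(w)$) is the same as the paper's, which also starts from the volume/pigeonhole claim to find a free location for $Q_{k+1}$. However, there is a decisive gap in your placement of $Q_{k+1}$.

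You place $Q_{k+1}$ \emph{at distance exactly $\rho\ve$} from the nearest $\bar Q_i$. But then the dominant cross-interaction,
\[
\int_{\Omega_\ve} w_{\ve,\bar Q_i}^{\,p}\, w_{\ve,Q_{k+1}}\,dx \;=\; \bigl(\gamma + O(\rho^{-1/2})\bigr)\, e^{-|\bar Q_i - Q_{k+1}|/\ve} \;\approx\; \gamma\, e^{-\rho},
\]
by Lemma \ref{lemma3}, and it enters the expansion of $J_\ve(\bar W)$ with a minus sign. This is of the very order $e^{-\rho}$ you are trying to beat, and in fact its magnitude $\gamma e^{-\rho}$ exceeds the allowed deficit $\tfrac{\gamma}{4}e^{-\rho}$. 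Your assertion that the cross terms ``total at most $O(e^{-(1+\xi)\rho})$'' is therefore false at this separation, and the argument collapses: you would only obtain $C_{k+1}^\ve \ge C_k^\ve + I(w) - (\gamma+o(1))e^{-\rho}$, which is strictly weaker than \equ{e501}. The paper fixes this by choosing $Q_{k+1}$ so that $B_{3\rho\ve}(Q_{k+1})\cap\{\bar Q_1,\dots,\bar Q_k,\partial\Omega\}=\emptyset$, i.e.\ at distance $\ge 3\rho\ve$ from every $\bar Q_i$ and from $\partial\Omega$; then all pairwise interactions and the boundary term $B_\ve(Q_{k+1})$ are $O(e^{-3\rho})=O(e^{-(1+\xi)\rho})$, and the volume argument still produces such a $Q_{k+1}$ when $\delta$ is small.

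A secondary point worth flagging: even after fixing the separation, your control of the linear term $\langle J_\ve'(\bar W),\varphi_{k+1}\rangle = -\int_{\Omega_\ve} S_\ve(\bar W)\varphi_{k+1}$ by ``$\|S_\ve(\bar W)\|_*\,\|\varphi_{k+1}\|_{H^1}$'' is not a valid pairing with a uniform-in-$k$ constant, since the weight $W$ of \equ{Wdef} has $\|W\|_{L^2(\Omega_\ve)}\sim\sqrt{k}$. One must instead split $S_\ve(\bar W)=\bar S+\sum c_{ij}Z_{ij}$, treat the $\bar S$ part via the genuine $L^2$ bound \equ{s} and the $H^1$ bound of Lemma \ref{lemma501}, and treat the $\sum c_{ij}Z_{ij}$ part via the orthogonality conditions (so that for $i\le k$ only $-\int Z_{ij}\phi_{\ve,Q_{k+1}}$ survives) together with the packing bound $\sum_i e^{-\eta|\bar Q_i-Q_{k+1}|/\ve}\le C e^{-3\eta\rho}$. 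Finally, note that the paper's route differs technically from yours: it introduces a cutoff $\tilde\chi$ centered at $Q_{k+1}/\ve$ and splits $\mu=\mu_1+\mu_2$ with disjoint supports, so that $J_\ve(\mu)=J_\ve(\mu_1)+J_\ve(\mu_2)$ exactly, pushing all error into annulus integrals and the localized comparisons $J_\ve(\mu_1)\approx I(w)$, $J_\ve(\mu_2)\approx C_k^\ve$. Your direct Taylor/cross-term expansion can be made to work with the corrected separation, but the cutoff device circumvents the nonadditivity of $u\mapsto\int u^{p+1}$ and makes the uniform-in-$k$ bookkeeping cleaner.
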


\medskip

\begin{proof}
We prove it by contradiction. Assume that on  the contrary we have
\begin{equation} \label{assumption}
C_{k+1}^\ve\leq C_k^\ve+I(w)-\frac{\gamma}{4}e^{-\rho}.
\end{equation}

First we claim:

Given $(Q_1,\cdots,Q_k)\in\bar{\Lambda}_k$, there exists $Q_{k+1}\in\Omega$, such that
\begin{equation}
\label{B49}
B_{3\rho\ve}(Q_{k+1})\cap \{Q_1,\cdots,Q_k,\partial \Omega\}=\emptyset.
 \end{equation}

In fact, if not,  we have $k\cdot|B_1|\cdot(3\rho)^n\geq \frac{|\Omega|}{2\ve^n}$. So $k\geq \frac{|\Omega|}{2\times3^n\rho^n\ve^n|B_1|}=\frac{C_{\Omega,n}}{\rho^n\ve^n}$. By the assumption, we have $k\leq \frac{\delta}{\rho^n\ve^n}$ where $\delta$ is sufficiently small. This is a contradiction if we choose $\delta$ so small such that $\delta<C_{\Omega,n}$. So the claimed is proved.

Assume that $(\bar{Q}_1,\cdots,\bar{Q}_k)\in \bar{\Lambda}_k$ is such that $\mathcal{M}_{\ve}(\bar{Q}_1,\cdots,\bar{Q}_k)=\max_{\mathbf{Q}\in \Lambda_k}\mathcal{M}_\ve(\mathbf{Q})=C_k^\ve$, and we denote the solution by $u_{\ve,\bar{Q}_1,\cdots,\bar{Q}_k}$. Let $Q_{k+1}$ be a point satisfying (\ref{B49}). (The existence of $ C_k^\ve$ follows from continuity of  $ {\mathcal M}_\ve$.)

Next we consider the solution concentrates at $(\bar{Q}_1,\cdots,\bar{Q}_k,Q_{k+1})$. As in Section 5,  we decompose  the solution  as \begin{equation}
u_{\ve,\bar{Q}_1,\cdots,\bar{Q}_k,Q_{k+1}}=u_{\ve,\bar{Q}_1,\cdots,\bar{Q}_k}+u_{\ve,Q_{k+1}}+\varphi_{k+1}.
\end{equation}

%Then similar to the proof of Proposition \ref{p401} in Section 4, we have the following estimate for $\varphi_{k+1}$:
%\begin{eqnarray}\label{varphi}
%\|\varphi_{k+1}\|_{**,\bar{Q}_1,\cdots,Q_{k+1}}&&\leq \|(u_{\ve,\bar{Q}_1,\cdots,\bar{Q}_k}+u_{\ve,Q_{k+1}})^p-u_{\ve,\bar{Q}_1,\cdots,\bar{Q}_k}^p
%-u_{\ve,Q_{k+1}}^p\|_{**,\bar{Q}_1,\cdots,Q_{k+1}}\nonumber \\
%&&\leq Ce^{-\frac{1+\xi}{2}\rho}.
%\end{eqnarray}

By the definition of $C_k^{\ve}$, it is easy to see that
\begin{equation}\label{ck1}
C_{k+1}^\ve\geq J_\ve(u_{\ve,\bar{Q}_1,\cdots,Q_{k+1}}).
\end{equation}
Define a cut-off function $\tilde{\chi}$  such that $\tilde{\chi}(x)=\tau(dist(x,\partial B_{\frac{3\rho}{2}}(\frac{Q_{k+1}}{\ve})))$, where $\tau$ is a cutoff function, $\tau(t)=0$ if $t\leq \frac{1}{2}$, $\tau(t)=1$ if $t\geq 1$.

Let us define, now, $\mu=\tilde{\chi} u_{\ve,\bar{Q}_1,\cdots,Q_{k+1}}$.  Then we evaluate $J_\ve(\mu)$:
\begin{eqnarray*}
&&J_\ve(\mu)=J_\ve(\tilde{\chi} u_{\ve,\bar{Q}_1,\cdots,Q_{k+1}})\\
&&=\frac{1}{2}\int_{\Omega_\ve}|\tilde{\chi} \nabla u_{\ve,\bar{Q}_1,\cdots,Q_{k+1}}+u_{\ve,\bar{Q}_1,\cdots,Q_{k+1}}\nabla \tilde{\chi}|^2+\tilde{\chi}^2u_{\ve,\bar{Q}_1,\cdots,Q_{k+1}}^2dx\\
&&-\frac{1}{p+1}\int_{\Omega_{\ve}}\tilde{\chi}^{p+1}u_{\ve,\bar{Q}_1,\cdots,Q_{k+1}}^{p+1}dx\\
&&=\frac{1}{2}\int_{\Omega_\ve}(|\nabla u_{\ve,\bar{Q}_1,\cdots,Q_{k+1}}|^2+u_{\ve,\bar{Q}_1,\cdots,Q_{k+1}}^2)dx-\frac{1}{p+1}\int_{\Omega_\ve}
u_{\ve,\bar{Q}_1,\cdots,Q_{k+1}}^{p+1}dx\\
&&+\frac{1}{2}\int_{\Omega_\ve}|\nabla \tilde{\chi}|^2u_{\ve,\bar{Q}_1,\cdots,Q_{k+1}}^2dx
+\frac{1}{4}\int_{\Omega_\ve}\nabla\tilde{\chi}^2\nabla u_{\ve,\bar{Q}_1,\cdots,Q_{k+1}}^2dx\\
&&+\frac{1}{2}\int_{\Omega_\ve}(\tilde{\chi}^2-1)(|\nabla u_{\ve,\bar{Q}_1,\cdots,Q_{k+1}}|^2+u_{\ve,\bar{Q}_1,\cdots,Q_{k+1}}^2)dx\\
&&+\frac{1}{p+1}\int_{\Omega_\ve}(1-\tilde{\chi}^{p+1})u_{\ve,\bar{Q}_1,\cdots,Q_{k+1}}^{p+1}dx\\
&&= J_\ve(u_{\ve,\bar{Q}_1,\cdots,Q_{k+1}})+\frac{1}{2}\int_{\Omega_\ve}(|\nabla \tilde{\chi}|^2-\frac{1}{2}\Delta \tilde{\chi}^2)u_{\ve,\bar{Q}_1,\cdots,Q_{k+1}}^2dx\\
&&+\frac{1}{2}\int_{\Omega_\ve}(\tilde{\chi}^2-1)(|\nabla u_{\ve,\bar{Q}_1,\cdots,Q_{k+1}}|^2+u_{\ve,\bar{Q}_1,\cdots,Q_{k+1}}^2)dx\\
&&+\frac{1}{p+1}\int_{\Omega_\ve}(1-\tilde{\chi}^{p+1})
u_{\ve,\bar{Q}_1,\cdots,Q_{k+1}}^{p+1}dx.
\end{eqnarray*}
By the definition of the cut-off function $\tilde{\chi}$ and taking into account the exponentially decaying away from the spikes of the function $u_{\ve,\bar{Q}_1,\cdots,Q_{k+1}}$, we have
\begin{eqnarray*}
&&|\frac{1}{2}\int_{\Omega_\ve}(|\nabla \tilde{\chi}|^2-\frac{1}{2}\Delta \tilde{\chi}^2)u_{\ve,\bar{Q}_1,\cdots,Q_{k+1}}^2dx
+\frac{1}{p+1}\int_{\Omega_\ve}(1-\tilde{\chi}^{p+1})u_{\ve,\bar{Q}_1,\cdots,Q_{k+1}}^{p+1}dx\\
&&+\frac{1}{2}\int_{\Omega_\ve}(\tilde{\chi}^2-1)(|\nabla u_{\ve,\bar{Q}_1,\cdots,Q_{k+1}}|^2+u_{\ve,\bar{Q}_1,\cdots,Q_{k+1}}^2)dx|\leq Ce^{-(1+\xi)\rho}
\end{eqnarray*}
for some $\xi>0$.
So we get
\begin{equation}\label{mu}
J_\ve(\mu)= J_\ve(u_{\ve,\bar{Q}_1,\cdots,Q_{k+1}})+O(e^{-(1+\xi)\rho})
\end{equation}
for some $\xi>0$.

On the other hand, one can see that
\begin{equation}
\mu=\mu_1+\mu_2,
\end{equation}
with
\begin{equation}
\mu_1=\left\{\begin{array}{c}
\tilde{\chi} u_{\ve,\bar{Q}_1,\cdots,Q_{k+1}}\ \ \ \ \mbox{ if }x\in B_{\frac{3\rho}{2}}(\frac{Q_{k+1}}{\ve}) \mbox{ and } dist(x,\partial B_{\frac{3\rho}{2}}(\frac{Q_{k+1}}{\ve}))\geq \frac{1}{2}\\
0\ \ \ \mbox{otherwise},
\end{array}
\right.
\end{equation}
and
\begin{equation}
\mu_2=\left\{\begin{array}{c}
\tilde{\chi} u_{\ve,\bar{Q}_1,\cdots,Q_{k+1}}\ \ \ \ \mbox{ if }x\in \Omega_\ve \backslash B_{\frac{3\rho}{2}}(\frac{Q_{k+1}}{\ve}) \mbox{ and } dist(x,\partial B_{\frac{3\rho}{2}}(\frac{Q_{k+1}}{\ve}))\geq \frac{1}{2}\\
0\ \ \ \mbox{ otherwise }.
\end{array}
\right.
\end{equation}
From the definition of $\mu_1$ and $\mu_2$, we have
\begin{equation}\label{mu1mu2}
J_\ve(\mu)=J_\ve(\mu_1+\mu_2)=J_\ve(\mu_1)+J_\ve(\mu_2).
\end{equation}
So we need to evaluate $J_\ve(\mu_1) $ and $J_\ve(\mu_2)$ separately.

First let us consider $J_\ve(\mu_1)$:
\begin{eqnarray*}
&&J_{\ve}(\mu_1)=\frac{1}{2}\int_{B_{\frac{3\rho-1}{2}}(\frac{Q_{k+1}}{\ve})}|\nabla \tilde{\chi} u_{\ve,\bar{Q}_1,\cdots,Q_{k+1}}+\nabla u_{\ve,\bar{Q}_1,\cdots,Q_{k+1}}\tilde{\chi}|^2+|\tilde{\chi} u_{\ve,\bar{Q}_1,\cdots,Q_{k+1}}|^2\\
&&-\frac{1}{p+1}\int_{B_{\frac{3\rho-1}{2}}(\frac{Q_{k+1}}{\ve})}(\tilde{\chi} u_{\ve,\bar{Q}_1,\cdots,Q_{k+1}})^{p+1}dx\\
&&=\frac{1}{2}\int_{B_{\frac{3\rho-1}{2}}(\frac{Q_{k+1}}{\ve})}\tilde{\chi}^2(|\nabla u_{\ve,\bar{Q}_1,\cdots,Q_{k+1}}|^2+u_{\ve,\bar{Q}_1,\cdots,Q_{k+1}}^2)\\
&&-\frac{1}{p+1}\int_{B_{\frac{3\rho-1}{2}}(\frac{Q_{k+1}}{\ve})}\tilde{\chi}^{p+1}
u_{\ve,\bar{Q}_1,\cdots,Q_{k+1}}^{p+1}dx\\
&&+\frac{1}{2}\int_{B_{\frac{3\rho-1}{2}}(\frac{Q_{k+1}}{\ve})}(|\nabla \tilde{\chi}|^2-\frac{1}{2}\Delta \tilde{\chi}^2)u_{\ve,\bar{Q}_1,\cdots,Q_{k+1}}^2dx\\
&&=\frac{1}{2}\int_{B_{\frac{3\rho-1}{2}}(\frac{Q_{k+1}}{\ve})}(|\nabla u_{\ve,\bar{Q}_1,\cdots,Q_{k+1}}|^2+u_{\ve,\bar{Q}_1,\cdots,Q_{k+1}}^2)dx\\
&&-\frac{1}{p+1}\int_{B_{\frac{3\rho-1}{2}}(\frac{Q_{k+1}}{\ve})}
u_{\ve,\bar{Q}_1,\cdots,Q_{k+1}}^{p+1}dx+O(e^{-(1+\xi)\rho})\\
\end{eqnarray*}
\begin{eqnarray*}
&&=\frac{1}{2}\int_{B_{\frac{3\rho-1}{2}}(\frac{Q_{k+1}}{\ve})}(|\nabla u_{\ve,Q_{k+1}}|^2+u_{\ve,Q_{k+1}}^2)
-\frac{1}{p+1}\int_{B_{\frac{3\rho-1}{2}}(\frac{Q_{k+1}}{\ve})} u_{\ve,Q_{k+1}}^{p+1}\\
&&+[\frac{1}{2}\int_{B_{\frac{3\rho-1}{2}}(\frac{Q_{k+1}}{\ve})}(|\nabla u_{\ve,\bar{Q}_1,\cdots,Q_{k+1}}|^2+u_{\ve,\bar{Q}_1,\cdots,Q_{k+1}}^2)dx\\
&&-\frac{1}{2}\int_{B_{\frac{3\rho-1}{2}}(\frac{Q_{k+1}}{\ve})}(|\nabla u_{\ve,Q_{k+1}}|^2+u_{\ve,Q_{k+1}}^2)\\
&&-\frac{1}{p+1}\int_{B_{\frac{3\rho-1}{2}}(\frac{Q_{k+1}}{\ve})}u_{\ve,\bar{Q}_1,\cdots,Q_{k+1}}^{p+1}dx
+\frac{1}{p+1}\int_{B_{\frac{3\rho-1}{2}}(\frac{Q_{k+1}}{\ve})} u_{\ve,Q_{k+1}}^{p+1}dx]\\
&&+O(e^{-(1+\xi)\rho}).\\
\end{eqnarray*}
Using (\ref{vark100}) and (\ref{keyvar}), we obtain
\begin{eqnarray}\label{e501n}
&&|\frac{1}{2}\int_{B_{\frac{3\rho-1}{2}}(\frac{Q_{k+1}}{\ve})}(|\nabla u_{\ve,\bar{Q}_1,\cdots,Q_{k+1}}|^2+u_{\ve,\bar{Q}_1,\cdots,Q_{k+1}}^2)
-\frac{1}{2}\int_{B_{\frac{3\rho-1}{2}}(\frac{Q_{k+1}}{\ve})}(|\nabla u_{\ve,Q_{k+1}}|^2+u_{\ve,Q_{k+1}}^2)\nonumber\\
&&-\frac{1}{p+1}\int_{B_{\frac{3\rho-1}{2}}(\frac{Q_{k+1}}{\ve})}u_{\ve,\bar{Q}_1,\cdots,Q_{k+1}}^{p+1}dx
+\frac{1}{p+1}\int_{B_{\frac{3\rho-1}{2}}(\frac{Q_{k+1}}{\ve})} u_{\ve,Q_{k+1}}^{p+1}dx|\nonumber\\
&&=|\int_{B_{\frac{3\rho-1}{2}}(\frac{Q_{k+1}}{\ve})}\nabla u_{\ve,Q_{k+1}}\nabla(u_{\ve,\bar{Q}_1,\cdots,\bar{Q}_k}+\varphi_{k+1})
+u_{\ve,Q_{k+1}}(u_{\ve,\bar{Q}_1,\cdots,\bar{Q}_k}+\varphi_{k+1})\nonumber\\
&&-u_{\ve,Q_{k+1}}^p(u_{\ve,\bar{Q}_1,\cdots,\bar{Q}_k}+\varphi_{k+1})dx|
+O(e^{-(1+\xi)\rho})\nonumber\\
&&=|\int_{\partial B_{\frac{3\rho-1}{2}}(\frac{Q_{k+1}}{\ve})}\frac{\partial u_{\ve,Q_{k+1}}}{\partial \nu}(u_{\ve,\bar{Q}_1,\cdots,\bar{Q}_k}+\varphi_{k+1})\\
&&-\int_{ B_{\frac{3\rho-1}{2}}(\frac{Q_{k+1}}{\ve})}
S_\ve(u_{\ve,Q_{k+1}})(u_{\ve,\bar{Q}_1,\cdots,\bar{Q}_k}+\varphi_{k+1})|+O(e^{-(1+\xi)\rho})\nonumber\\
&&\leq C\|S_\ve(u_{\ve,Q_{k+1}})\|_{L^2(B_{\frac{3\rho-1}{2}}(\frac{Q_{k+1}}{\ve}))}
(\|(u_{\ve,\bar{Q}_1,\cdots,\bar{Q}_k}\|_{L^2(B_{\frac{3\rho-1}{2}}(\frac{Q_{k+1}}{\ve}))}
+\|\varphi_{k+1})\|_{L^2})\nonumber\\
&&+O(e^{-(1+\xi)\rho}).\nonumber\\
\end{eqnarray}
By (\ref{cijl}), Proposition \ref{p401} and Lemma \ref{lemma501}, we infer that
\begin{eqnarray}
&&\|S_\ve(u_{\ve,Q_{k+1}})\|_{L^2(B_{\frac{3\rho-1}{2}}(\frac{Q_{k+1}}{\ve}))}
(\|(u_{\ve,\bar{Q}_1,\cdots,\bar{Q}_k}\|_{L^2(B_{\frac{3\rho-1}{2}}(\frac{Q_{k+1}}{\ve}))}
+\|\varphi_{k+1})\|_{L^2})\nonumber\\
&&\leq Ce^{-(1+\xi)\rho}.
\end{eqnarray}
Again by Lemma \ref{lemma2} and Proposition \ref{p401}, we have
\begin{eqnarray*}
&&|\frac{1}{2}\int_{\Omega_\ve\backslash B_{\frac{3\rho-1}{2}}(\frac{Q_{k+1}}{\ve})}(|\nabla u_{\ve,Q_{k+1}}|^2+u_{\ve,Q_{k+1}}^2)
-\frac{1}{p+1}\int_{\Omega_\ve\backslash B_{\frac{3\rho-1}{2}}(\frac{Q_{k+1}}{\ve})} u_{\ve,Q_{k+1}}^{p+1}|\\
&&\leq ce^{-(1+\xi)\rho}.
\end{eqnarray*}

Combining the above, we obtain
\begin{eqnarray}\label{mu1}
J_\ve(\mu_1)=J_\ve(u_{\ve,Q_{k+1}})+O(e^{-(1+\xi)\rho}).
\end{eqnarray}
Similar to (\ref{e501n}), we have
\begin{eqnarray}
J_\ve(u_{\ve,Q_{k+1}})&=&J_\ve(w_{\ve,Q_{k+1}}+\phi_{\ve,Q_{k+1}})\\
&=&J_\ve(w_{\ve,Q_{k+1}})+O(e^{-(1+\xi)\rho}).\nonumber
\end{eqnarray}
By the definition of $w_{\ve,Q_{k+1}}$, we get
\begin{eqnarray*}
&&J_\ve(w_{\ve,Q_{k+1}})\\
&&=\frac{1}{2}\int_{\Omega_\ve}w_{Q_{k+1}}^pw_{\ve,Q_{k+1}}dx
-\frac{1}{p+1}\int_{\Omega_\ve}w_{\ve,Q_{k+1}}^{p+1}dx\\
&&=\frac{1}{2}\int_{\Omega_\ve}w_{Q_{k+1}}^{p+1}dx
-\frac{1}{p+1}\int_{\Omega_\ve}w_{Q_{k+1}}^{p+1}dx\\
&&-\frac{1}{2}\int_{\Omega_\ve}w_{Q_{k+1}}^p\varphi_{\ve,Q_{k+1}}dx
-\frac{1}{p+1}\int_{\Omega_\ve}w_{\ve,Q_{k+1}}^{p+1}-w_{Q_{k+1}}^{p+1}dx.\\
\end{eqnarray*}
Note that
\begin{eqnarray*}
&&\int_{\Omega_\ve}(\frac{1}{2}-\frac{1}{p+1})w_{Q_{k+1}}^{p+1}dx\\
&&=\int_{\R^n}(\frac{1}{2}-\frac{1}{p+1})w_{Q_{k+1}}^{p+1}dx
-\int_{\R^n\backslash \Omega_\ve}(\frac{1}{2}-\frac{1}{p+1})w_{Q_{k+1}}^{p+1}dx\\
&&=I(w)+O(e^{-(1+\xi)\rho})
\end{eqnarray*}
and
\begin{eqnarray*}
|\int_{\Omega_\ve}\frac{1}{p+1}w_{\ve,Q_{k+1}}^{p+1}-\frac{1}{p+1}w_{Q_{k+1}}^{p+1}
+w_{Q_{k+1}}^p\varphi_{\ve,Q_{k+1}}dx|&&\leq C\int_{\Omega_\ve}w_{Q_{k+1}}^{p-1}\varphi_{\ve,Q_{k+1}}^2dx\\
&&\leq Ce^{-(1+\xi)\rho}.
\end{eqnarray*}

So by Lemma \ref{lemma3}, we get
\begin{eqnarray}\label{wve}
J_\ve(w_{\ve,Q_{k+1}})&&=I(w)-\frac{1}{2}B_\ve(Q_{k+1})+O(e^{-(1+\xi)\rho})\\
&&=I(w)+O(e^{-(1+\xi)\rho}).\nonumber
\end{eqnarray}

(\ref{mu1}) and (\ref{wve}) yield

\begin{eqnarray}\label{qk1}
J_\ve(u_{\ve,Q_{k+1}})&=&I(w)-\frac{1}{2}B_\ve(Q_{k+1})+O(e^{-(1+\xi)\rho})\\
&=&I(w)+O(e^{-(1+\xi)\rho}).\nonumber
\end{eqnarray}
Now let us consider $J_\ve(\mu_2)$:
\begin{eqnarray*}
&&J_\ve(\mu_2)\\
&&=\frac{1}{2}\int_{\Omega_\ve\backslash B_{\frac{3\rho+1}{2}}(\frac{Q_{k+1}}{\ve})}|\nabla \tilde{\chi} u_{\ve,\bar{Q}_1,\cdots,Q_{k+1}}+\nabla u_{\ve,\bar{Q}_1,\cdots,Q_{k+1}}\tilde{\chi}|^2+|\tilde{\chi} u_{\ve,\bar{Q}_1,\cdots,Q_{k+1}}|^2dx\\
&&-\frac{1}{p+1}\int_{\Omega_\ve\backslash B_{\frac{3\rho+1}{2}}(\frac{Q_{k+1}}{\ve})}(\tilde{\chi} u_{\ve,\bar{Q}_1,\cdots,Q_{k+1}})^{p+1}dx\\
&&=\frac{1}{2}\int_{\Omega_\ve\backslash B_{\frac{3\rho+1}{2}}(\frac{Q_{k+1}}{\ve})}(|\nabla u_{\ve,\bar{Q}_1,\cdots,Q_{k+1}}|^2+u_{\ve,\bar{Q}_1,\cdots,Q_{k+1}}^2)dx\\
&&-\frac{1}{p+1}\int_{\Omega_\ve\backslash B_{\frac{3\rho+1}{2}}(\frac{Q_{k+1}}{\ve})}u_{\ve,\bar{Q}_1,\cdots,Q_{k+1}}^{p+1}dx+O(e^{-(1+\xi)\rho})\\
&&=\frac{1}{2}\int_{\Omega_\ve\backslash B_{\frac{3\rho+1}{2}}(\frac{Q_{k+1}}{\ve})}|\nabla u_{\ve,\bar{Q}_,\cdots,\bar{Q}_k}|^2
+u_{\ve,\bar{Q}_1,\cdots,\bar{Q}_k}^2dx\\
&&-\frac{1}{p+1}\int_{\Omega_\ve\backslash B_{\frac{3\rho+1}{2}}(\frac{Q_{k+1}}{\ve})}u_{\ve,\bar{Q}_1,\cdots,\bar{Q}_k}^{p+1}dx\\
&&+[\frac{1}{2}\int_{\Omega_\ve\backslash B_{\frac{3\rho+1}{2}}(\frac{Q_{k+1}}{\ve})}(|\nabla u_{\ve,\bar{Q}_1,\cdots,Q_{k+1}}|^2+u_{\ve,\bar{Q}_1,\cdots,Q_{k+1}}^2)dx\\
&&-\frac{1}{2}\int_{\Omega_\ve\backslash B_{\frac{3\rho+1}{2}}(\frac{Q_{k+1}}{\ve})}|\nabla u_{\ve,\bar{Q}_,\cdots,\bar{Q}_k}|^2
+u_{\ve,\bar{Q}_1,\cdots,\bar{Q}_k}^2dx\\
&&-\frac{1}{p+1}\int_{\Omega_\ve\backslash B_{\frac{3\rho+1}{2}}(\frac{Q_{k+1}}{\ve})}u_{\ve,\bar{Q}_1,\cdots,Q_{k+1}}^{p+1}dx
+\frac{1}{p+1}\int_{\Omega_\ve\backslash B_{\frac{3\rho+1}{2}}(\frac{Q_{k+1}}{\ve})}u_{\ve,\bar{Q}_1,\cdots,\bar{Q}_k}^{p+1}dx]\\
&&+O(e^{-(1+\xi)\rho}).\\
\end{eqnarray*}

Similar to (\ref{e501n}), we can get
\begin{eqnarray*}
&&|\frac{1}{2}\int_{\Omega_\ve\backslash B_{\frac{3\rho+1}{2}}(\frac{Q_{k+1}}{\ve})}(|\nabla u_{\ve,\bar{Q}_1,\cdots,Q_{k+1}}|^2+u_{\ve,\bar{Q}_1,\cdots,Q_{k+1}}^2)dx\\
&&-\frac{1}{2}\int_{\Omega_\ve\backslash B_{\frac{3\rho+1}{2}}(\frac{Q_{k+1}}{\ve})}|\nabla u_{\ve,\bar{Q}_,\cdots,\bar{Q}_k}|^2+u_{\ve,\bar{Q}_1,\cdots,\bar{Q}_k}^2dx\\
&&-\frac{1}{p+1}\int_{\Omega_\ve\backslash B_{\frac{3\rho+1}{2}}(\frac{Q_{k+1}}{\ve})}u_{\ve,\bar{Q}_1,\cdots,Q_{k+1}}^{p+1}dx+\frac{1}{p+1}\int_{\Omega_\ve\backslash B_{\frac{3\rho+1}{2}}(\frac{Q_{k+1}}{\ve})}u_{\ve,\bar{Q}_1,\cdots,\bar{Q}_k}^{p+1}dx|\\
&&=|\int_{\Omega_\ve\backslash B_{\frac{3\rho+1}{2}}(\frac{Q_{k+1}}{\ve})}S_\ve(u_{\ve,\bar{Q}_1\cdots,\bar{Q}_k})
(u_{\ve,Q_{k+1}}+\varphi_{k+1})dx|+e^{-(1+\xi)\rho}\\
&&=|\sum_{i=1,\cdots,k, \  j=1,\cdots,n}c_{ij}
\int_{\Omega_\ve\backslash B_{\frac{3\rho+1}{2}}(\frac{Q_{k+1}}{\ve})}Z_{ij}(u_{\ve,Q_{k+1}}+\varphi_{k+1})dx|.
\end{eqnarray*}
By Lemma \ref{lemma501}, (\ref{decom}), (\ref{c}), (\ref{d}) and (\ref{cijl}), we have
\begin{eqnarray*}
&&|\sum_{i=1,\cdots,k, \  j=1,\cdots,n}c_{ij}\int_{\Omega_\ve\backslash B_{\frac{3\rho+1}{2}}(\frac{Q_{k+1}}{\ve})}Z_{ij}\varphi_{k+1}dx|\\
&&=|\sum_{i=1,\cdots,k, \ j=1,\cdots,n}c_{ij}\int_{\Omega_\ve\backslash B_{\frac{3\rho+1}{2}}(\frac{Q_{k+1}}{\ve})}Z_{ij}(\sum c_i\phi_i+\sum_{ij}d_{ij}Z_{ij})dx|\\
&&\leq c \sup_{ij}|c_{ij}|\sum (|c_i|+|d_{ij}|)\\
&&\leq ce^{-(1+\xi)\rho},
\end{eqnarray*}
\begin{eqnarray*}
&&|\sum_{i=1,\cdots,k, \ j=1,\cdots,n}c_{ij}\int_{\Omega_\ve\backslash B_{\frac{3\rho+1}{2}}(\frac{Q_{k+1}}{\ve})}Z_{ij}u_{\ve,Q_{k+1}}dx|\leq ce^{-(1+\xi)\rho},
\end{eqnarray*}
and
\begin{equation*}
|\int_{B_{\frac{3\rho+1}{2}}(\frac{Q_{k+1}}{\ve})}|\nabla u_{\ve,\bar{Q}_1,\cdots,\bar{Q}_k}|^2+u_{\ve,\bar{Q}_1,\cdots,\bar{Q}_k}^2-\frac{1}{p+1}u_{\ve,\bar{Q}_1,\cdots,\bar{Q}_k}^{p+1}dx|\leq Ce^{-(1+\xi)\rho}.
\end{equation*}
Recalling that
\begin{equation}
C^k_\ve=J_\ve(u_{\ve,\bar{Q}_1,\cdots,\bar{Q}_k}),
\end{equation}
we get
\begin{equation}\label{mu2}
J_\ve(\mu_2)= C_k^\ve+O(e^{-(1+\xi)\rho}).
\end{equation}
Thus combining (\ref{ck1}), (\ref{mu}), (\ref{mu1mu2}), (\ref{mu1}), (\ref{qk1}) and (\ref{mu2}), we have
\begin{eqnarray*}
J_\ve(\mu)&=&J_\ve(\mu_1+\mu_2)\\
&=&J_\ve(\mu_1)+J_\ve(\mu_2)\\
&=& C_k^\ve+I(w)+O(e^{-(1+\xi)\rho})\\
&=&J_\ve(u_{\ve,\bar{Q}_1,\cdots,Q_{k+1}})+O(e^{-(1+\xi)\rho})\\
&\leq& C_{k+1}^\ve+O(e^{-(1+\xi)\rho}).
\end{eqnarray*}
Thus,
\begin{eqnarray*}
C_{k+1}^\ve\geq C_k^\ve+I(w)+O(e^{-(1+\xi)\rho}),
\end{eqnarray*}
a contradiction with the assumption (\ref{assumption}).
\end{proof}
\begin{remark}\label{remark501}
From the proof above, we may take $\delta(n,p,\Omega)=\frac{\delta_0}{\rho_0^n}<<\frac{|\Omega|}{2\times 3^n|B_1|\rho_0^n}$ for some $\delta_0>0$ small, where $\rho_0$ is as in Section 4.

\end{remark}
Next we have the following Proposition:
\begin{proposition}\label{p502}
The maximization problem
\begin{equation}
\max_{\mathbf{Q}\in \bar{\Lambda}_k} \mathcal{M}_\ve(\mathbf{Q})
\end{equation}
has a solution $\mathbf{Q}^\ve \in \Lambda_k^\circ$, i.e., the interior of $\Lambda_k$.
\end{proposition}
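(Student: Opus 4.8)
The plan is to argue by contradiction. A maximizer $\bar{\mathbf{Q}}=(\bar Q_1,\dots,\bar Q_k)\in\bar\Lambda_k$ exists by compactness of $\bar\Lambda_k$ and the continuity of $\mathbf{Q}\mapsto\mathcal{M}_\ve(\mathbf{Q})=J_\ve(w_{\ve,\mathbf{Q}}+\phi_{\ve,\mathbf{Q}})$ (the $C^1$ dependence of $\phi_{\ve,\mathbf{Q}}$ on $\mathbf{Q}$ is part of Proposition \ref{p401}). Suppose $\bar{\mathbf{Q}}\in\partial\Lambda_k$. Then at least one of the inequalities defining $\Lambda_k$ is an equality, so there is an index $i_0$ for which one of the following holds: $|\bar Q_{i_0}-\bar Q_{j_0}|=\rho\ve$ for some $j_0\neq i_0$; or $2d(\bar Q_{i_0},\partial\Omega)=\rho\ve$; or $|\bar Q_{i_0}-\bar Q_{j_0}^*|=\rho\ve$ for some $j_0$ with $d(\bar Q_{j_0},\partial\Omega)\leq 10\ve|\ln\ve|$. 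In the last case, since $\bar Q_{j_0}^*$ lies outside $\Omega$ at distance $d(\bar Q_{j_0},\partial\Omega)$ from $\partial\Omega$ while $\bar Q_{i_0}\in\Omega$, one necessarily has $d(\bar Q_{j_0},\partial\Omega)\leq C\rho\ve$. Thus in each case $\bar{\mathbf{Q}}$ contains a \emph{crowded} spike, sitting within $O(\rho\ve)$ of another spike, of a reflected spike, or of $\partial\Omega$.

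Next I would relocate the crowded spike to a spacious position. Exactly as in the claim established at the beginning of the proof of Proposition \ref{p501} (a volume–packing count using that $k\leq\delta/(\rho^n\ve^n)$ with $\delta$ small enough that $k\,(3\rho\ve)^n|B_1|<|\Omega|/4$), there exists $\widehat Q\in\Omega$ with $d(\widehat Q,\partial\Omega)>10\ve|\ln\ve|$ and $|\widehat Q-\bar Q_l|>3\rho\ve$, $|\widehat Q-\bar Q_l^*|>3\rho\ve$ for every $l\neq i_0$. Let $\widehat{\mathbf{Q}}$ be obtained from $\bar{\mathbf{Q}}$ by replacing $\bar Q_{i_0}$ with $\widehat Q$. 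Then $\widehat{\mathbf{Q}}\in\Lambda_k$: the distance and reflection constraints involving $\widehat Q$ are met with strict room, and the only constraints that could still be tight are among the untouched spikes, which does not matter for the argument.

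Finally I would compare energies and derive a contradiction with maximality by showing $\mathcal{M}_\ve(\widehat{\mathbf{Q}})>\mathcal{M}_\ve(\bar{\mathbf{Q}})$. Splitting off the $i_0$-th spike and running the same energy expansion as in the proof of Proposition \ref{p501} — writing $u_{\ve,\mathbf{Q}}=w_{\ve,\mathbf{Q}}+\phi_{\ve,\mathbf{Q}}$ and using $\|\phi_{\ve,\mathbf{Q}}\|_*\leq Ce^{-(1+\xi)\rho/2}$ (Proposition \ref{p401}), Lemma \ref{lemma3}, and Lemma \ref{lemma501}, all uniform in $k$ — one obtains, with $\mathbf{Q}^-:=\bar{\mathbf{Q}}\setminus\{\bar Q_{i_0}\}\in\Lambda_{k-1}$,
\[
\mathcal{M}_\ve(\bar{\mathbf{Q}})\leq\mathcal{M}_\ve(\mathbf{Q}^-)+I(w)-\tfrac12\,\mathcal{I}(\bar Q_{i_0})+O(e^{-(1+\xi)\rho}),\qquad \mathcal{M}_\ve(\widehat{\mathbf{Q}})\geq\mathcal{M}_\ve(\mathbf{Q}^-)+I(w)-O(e^{-3\rho/2}),
\]
where $\mathcal{I}(\bar Q_{i_0})\geq 0$ collects the interaction integrals attached to the $i_0$-th spike that appear in the expansion: the self (boundary) term $B_\ve(\bar Q_{i_0})$, the direct spike–spike terms $B_\ve(\bar Q_{i_0},\bar Q_l)$, and the corresponding spike–image terms. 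In each of the three cases the active constraint forces one of these to be $\geq c\,w(\rho)=c\,A_n\rho^{-\frac{n-1}{2}}e^{-\rho}(1+o(1))$ with $c>0$, which dominates both $e^{-(1+\xi)\rho}$ and $e^{-3\rho/2}$ for $\rho$ large. Subtracting the two estimates gives $\mathcal{M}_\ve(\widehat{\mathbf{Q}})-\mathcal{M}_\ve(\bar{\mathbf{Q}})\geq\tfrac{c}{2}w(\rho)-O(e^{-3\rho/2})-O(e^{-(1+\xi)\rho})>0$, the desired contradiction; hence $\bar{\mathbf{Q}}\in\Lambda_k^\circ$. The hard part is precisely the displayed energy expansion carried out \emph{uniformly in $k$}: one must verify that the Lyapunov–Schmidt corrections contribute only $O(e^{-(1+\xi)\rho})$ to the reduced energy and that the interaction integrals localize around $\bar Q_{i_0}$ as claimed — this is the same bookkeeping already performed in the proof of Proposition \ref{p501}, run in the ``reverse'' direction of peeling off a spike rather than adding one. (Equivalently, one may phrase the contradiction against Proposition \ref{p501} directly: the first estimate above, together with $\mathcal{M}_\ve(\mathbf{Q}^-)\leq C_{k-1}^\ve$, would give $C_k^\ve\leq C_{k-1}^\ve+I(w)-c\,w(\rho)+O(e^{-(1+\xi)\rho})$, contradicting $C_k^\ve>C_{k-1}^\ve+I(w)-\tfrac{\gamma}{4}e^{-\rho}$ for $\rho$ large.)
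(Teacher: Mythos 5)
Your proposal is correct, relies on the same two ingredients as the paper (the secondary reduction estimate of Lemma 5.1 and the asymptotics for $B_\ve$ from Lemma 2.5), and arrives at the same conclusion; the difference is purely in how the contradiction is closed. The paper ``peels off'' the $k$-th spike from the boundary maximizer, obtains
$C_k^\ve = J_\ve(u_{\ve,\bar Q_1,\dots,\bar Q_k}) \le C_{k-1}^\ve + I(w) - \tfrac{\gamma}{4}e^{-\rho}$,
and contradicts the lower bound $C_k^\ve > C_{k-1}^\ve + I(w) - \tfrac{\gamma}{4}e^{-\rho}$ already established in Proposition 6.1. You instead explicitly construct a competitor $\widehat{\mathbf{Q}}\in\Lambda_k$ by relocating the crowded spike to a free spot and show its reduced energy beats $\mathcal{M}_\ve(\bar{\mathbf{Q}})$ directly. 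This is a self-contained contradiction against maximality that does not invoke Proposition 6.1; the price is that you must prove the two one-sided expansions (peel off / re-attach), which duplicates the work already done in the proof of Proposition 6.1. You also note the paper's shortcut as an ``equivalent'' alternative, so you clearly see both paths. One small imprecision: the remainder in $\mathcal{M}_\ve(\widehat{\mathbf{Q}})\ge \mathcal{M}_\ve(\mathbf{Q}^-)+I(w)-O(e^{-3\rho/2})$ should really be $O(e^{-(1+\xi)\rho})$ (the $\varphi$-correction from Lemma 5.1 dominates the direct interaction $O(e^{-3\rho})$), but since both are $o(e^{-\rho})$ the conclusion is unaffected.
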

\begin{proof}
We prove it by contradiction again.
If $\mathbf{Q}^\ve=(\bar{Q}_1,\cdots,\bar{Q}_k) \in \partial \Lambda_k$, then either there exists $(i,j)$ such that $|Q_i-Q_j|=\ve \rho$ or $|Q_i-Q_j^*|=\ve \rho$. Without loss of generality, we assume $(i,j)=(i,k)$. We have
\begin{eqnarray*}
J_\ve(u_{\ve,\bar{Q}_1,\cdots,\bar{Q}_k})
&=&J_\ve(u_{\ve,\bar{Q}_1,\cdots,\bar{Q}_{k-1}}+u_{\ve,\bar{Q}_k}
+\varphi_k)\\
&=&J_\ve(u_{\ve,\bar{Q}_1,\cdots,\bar{Q}_{k-1}}+u_{\ve,\bar{Q}_k})
+\int_{\Omega_\ve}\nabla(u_{\ve,\bar{Q}_1,\cdots,\bar{Q}_{k-1}}+u_{\ve,\bar{Q}_k})\nabla \varphi_k\\
&+&(u_{\ve,\bar{Q}_1,\cdots,\bar{Q}_{k-1}}+u_{\ve,\bar{Q}_k})\varphi_k
-(u_{\ve,\bar{Q}_1,\cdots,\bar{Q}_{k-1}}+u_{\ve,\bar{Q}_k})^p\varphi_kdx\\
&+&O(\|\varphi_k\|^2_{H^1})\\
&=&J_\ve(u_{\ve,\bar{Q}_1,\cdots,\bar{Q}_{k-1}}+u_{\ve,\bar{Q}_k})\\
&-&\int_{\Omega_\ve}S_\ve(u_{\ve,\bar{Q}_1,\cdots,\bar{Q}_{k-1}}
+u_{\ve,\bar{Q}_k})\varphi_kdx+O(\|\varphi_k\|^2_{H^1}).\\
\end{eqnarray*}
Observe that
\begin{eqnarray*}
&&S_\ve(u_{\ve,\bar{Q}_1,\cdots,\bar{Q}_{k-1}}
+u_{\ve,\bar{Q}_k})\\
&&=(u_{\ve,\bar{Q}_1,\cdots,\bar{Q}_{k-1}}
+u_{\ve,\bar{Q}_k})^p-u_{\ve,\bar{Q}_1,\cdots,\bar{Q}_{k-1}}^p
-u_{\ve,\bar{Q}_k}^p+\sum_{i=1,\cdots,k,j=1,\cdots,n}c_{ij}Z_{ij},
\end{eqnarray*}
for some $\{c_{ij}\}$ which  satisfies
\begin{equation}\label{cij}
|c_{ij}|\leq ce^{-(1+\xi)\frac{\rho}{2}}.
\end{equation}
Using Lemma \ref{lemma501}, we obtain
\begin{eqnarray*}
&&|\sum_{ij}c_{ij}\int_{\Omega_\ve}Z_{ij}\varphi_k dx|\\
&&=|\sum_{ij}c_{ij}\int_{\Omega_\ve}Z_{ij}(\psi+\sum c_i\phi_i+\sum_{ij}d_{ij}Z_{ij})dx|\\
&&=|\sum_{ij}c_{ij}\int_{\Omega_\ve}Z_{ij}(c_i\phi_i+d_{ij}Z_{ij})dx|\\
&&\leq \sup_{ij}|c_{ij}|\sum_{ij}(|c_i|+|d_{ij}|)\\
&&\leq ce^{-(1+\xi)\rho}
\end{eqnarray*}
by (\ref{c}), (\ref{d}) and (\ref{cij}).

Using the estimate (\ref{s}) and Lemma \ref{lemma501}, we find
\begin{eqnarray*}
&&|\int_{\Omega_\ve}((u_{\ve,\bar{Q}_1,\cdots,\bar{Q}_{k-1}}
+u_{\ve,\bar{Q}_k})^p-u_{\ve,\bar{Q}_1,\cdots,\bar{Q}_{k-1}}^p
-u_{\ve,\bar{Q}_k}^p)\varphi_kdx|\\
&&\leq c\|\bar{S}\|_{L^2}\|\varphi_k\|_{H^1}\leq ce^{-(1+\xi)\rho}.
\end{eqnarray*}
This implies that
\begin{eqnarray*}
&&J_\ve(u_{\ve,\bar{Q}_1,\cdots,\bar{Q}_k})\\
&&=J_\ve(u_{\ve,\bar{Q}_1,\cdots,\bar{Q}_{k-1}}+u_{\ve,\bar{Q}_k})\\
&&-\int_{\Omega_\ve}S_\ve(u_{\ve,\bar{Q}_1,\cdots,\bar{Q}_{k-1}}
+u_{\ve,\bar{Q}_k})\varphi_kdx+O(\|\varphi_k\|^2_{H^1})\\
&&=J_\ve(u_{\ve,\bar{Q}_1,\cdots,\bar{Q}_{k-1}}+u_{\ve,\bar{Q}_k})+O(e^{-(1+\xi)\rho}).
\end{eqnarray*}

Next we estimate
\begin{eqnarray*}
&&J_\ve(u_{\ve,\bar{Q}_1,\cdots,\bar{Q}_{k-1}}+u_{\ve,\bar{Q}_k})\\
&&=J_\ve(u_{\ve,\bar{Q}_1,\cdots,\bar{Q}_{k-1}})+
J_\ve(u_{\ve,\bar{Q}_k})\\
&&+\int_{\Omega_\ve}\nabla u_{\ve,\bar{Q}_1,\cdots,\bar{Q}_{k-1}}\nabla u_{\ve,\bar{Q}_k}
+u_{\ve,\bar{Q}_1,\cdots,\bar{Q}_{k-1}}u_{\ve,\bar{Q}_k}dx\\
&&-\frac{1}{p+1}\int_{\Omega_\ve}(u_{\ve,\bar{Q}_1,\cdots,\bar{Q}_{k-1}}+u_{\ve,\bar{Q}_k})^{p+1}
-u_{\ve,\bar{Q}_1,\cdots,\bar{Q}_{k-1}}^{p+1}-u_{\ve,\bar{Q}_k}^{p+1}dx,\\
\end{eqnarray*}
and
\begin{eqnarray*}
&&\int_{\Omega_\ve}\nabla u_{\ve,\bar{Q}_1,\cdots,\bar{Q}_{k-1}}\nabla u_{\ve,\bar{Q}_k}
+u_{\ve,\bar{Q}_1,\cdots,\bar{Q}_{k-1}}u_{\ve,\bar{Q}_k}dx\\
&&-\frac{1}{p+1}\int_{\Omega_\ve}(u_{\ve,\bar{Q}_1,\cdots,\bar{Q}_{k-1}}+u_{\ve,\bar{Q}_k})^{p+1}
-u_{\ve,\bar{Q}_1,\cdots,\bar{Q}_{k-1}}^{p+1}-u_{\ve,\bar{Q}_k}^{p+1}dx\\
&&=\int_{\Omega_\ve}(u_{\ve,\bar{Q}_{k+1}}^p-\sum_{l=1}^n c_{kl}Z_{kl})u_{\ve,\bar{Q}_1,\cdots,\bar{Q}_{k-1}}dx\\
&&-\int_{\Omega_\ve}u_{\ve,\bar{Q}_1,\cdots,\bar{Q}_{k-1}}^pu_{\ve,\bar{Q}_k}
+u_{\ve,\bar{Q}_k}^pu_{\ve,\bar{Q}_1,\cdots,\bar{Q}_{k-1}}dx+O(e^{-(1+\xi)\rho})\\
&&=-\int_{\Omega_\ve}u_{\ve,\bar{Q}_1,\cdots,\bar{Q}_{k-1}}^pu_{\ve,\bar{Q}_k}dx
+O(e^{-(1+\xi)\rho})
\end{eqnarray*}
by (\ref{cijl}) in Section 3.

The above three identities imply that
\begin{eqnarray}\label{qk2}
J_\ve(u_{\ve,\bar{Q}_1,\cdots,\bar{Q}_k})
&&=J_\ve(u_{\ve,\bar{Q}_1,\cdots,\bar{Q}_{k-1}})+
J_\ve(u_{\ve,\bar{Q}_k})\\
&&-\int_{\Omega_\ve}u_{\ve,\bar{Q}_1,\cdots,\bar{Q}_{k-1}}^pu_{\ve,\bar{Q}_k}dx
+O(e^{-(1+\xi)\rho}).\nonumber
\end{eqnarray}
Since
\begin{eqnarray}\label{qiqj}
&&\int_{\Omega_\ve}u_{\ve,\bar{Q}_1,\cdots,\bar{Q}_{k-1}}^pu_{\ve,\bar{Q}_k}dx\\
&&=\int_{\Omega_\ve}(\sum_{i=1}^{k-1}w_{\ve,\bar{Q}_i}+\phi_{\ve,\bar{Q}_1,\cdots,\bar{Q}_{k-1}})^p(
w_{\ve,\bar{Q}_k}+\phi_{\ve,\bar{Q}_k})dx\nonumber\\
&&\geq\int_{\Omega_\ve}w_{\ve,\bar{Q}_i}^pw_{\ve,\bar{Q}_k}dx+O(e^{-(1+\xi)\rho}),\nonumber
\end{eqnarray}
using (\ref{qk1}) (\ref{qk2}) and (\ref{qiqj}) , one can get

\begin{eqnarray*}
J_\ve(u_{\ve,\bar{Q}_1,\cdots,\bar{Q}_k})
&&\leq C_{k-1}^\ve+I(w)-\frac{1}{2}B_\ve(\bar{Q}_k)
-\int_{\Omega_\ve}w_{\ve,\bar{Q}_i}^pw_{\ve,\bar{Q}_k}dx+O(e^{-(1+\xi)\rho}).
\end{eqnarray*}
If either there exists $(i,k)$ such that $|Q_i-Q_k|=\ve \rho$ or $|Q_k-Q_k^*|=\ve \rho$, by Lemma \ref{lemma3}, we can get that
\begin{equation}
J_\ve(u_{\ve,\bar{Q}_1,\cdots,\bar{Q}_k})\leq C_{k-1}^\ve+I(w)-(\frac{\gamma}{2}+O(\frac{1}{\sqrt{\rho}}))e^{-\rho}
+O(e^{-(1+\xi)\rho}).
\end{equation}
Thus
\begin{eqnarray*}
C_k^\ve=\mathcal{M}_\ve(\mathbf{Q}^\ve)\leq C_{k-1}^\ve+I(w)-\frac{\gamma}{4}e^{-\rho}.
\end{eqnarray*}
We reach a
contradiction with Proposition \ref{p501}.

\end{proof}
\section{Proof of Theorem \ref{theo1}}
In this section, we apply the results in Section 4, Section 5 and Section 6 to prove Theorem \ref{theo1}. The proof is similar to \cite{LNW}.

\noindent
{\bf Proof of Theorem \ref{theo1}:} By Proposition \ref{p401} in Section 4, there exists $\ve_0,\rho_0$ such that for $0<\ve<\ve_0,\rho>\rho_0$, we have $C^1$ map which, to any $\mathbf{Q} \in \Lambda_k$, associates $\phi_{\ve,\mathbf{Q}}$ such that
\begin{equation}
S_\ve(w_{\ve,\mathbf{Q}}+\phi_{\ve,\mathbf{Q}})=\sum_{i=1,\cdots,k,j=1,\cdots,n}c_{ij}Z_{ij},\ \
\int_{\Omega_\ve}\phi_{\ve,\mathbf{Q}}Z_{ij}dx=0,
\end{equation}
for some constants $\{c_{ij}\}\in \R^{kn}$.

From Proposition \ref{p502} in Section 6, there is a $\mathbf{Q}^\ve \in \Lambda_k^\circ$ that achieves the maximum for the maximization problem in Proposition \ref{p502}. Let $u_\ve=w_{\ve,\mathbf{Q}^\ve}+\phi_{\ve,\mathbf{Q}^\ve}$. Then we have
\begin{equation}
D_{Q_{ij}}|_{Q_i=Q_i^\ve}\mathcal{M}_\ve(\mathbf{Q}^\ve)=0,\  \ i=1,\cdots,k, \ \ j=1,\cdots,n.
\end{equation}
Hence we have
\begin{eqnarray*}
&&\int_{\Omega_\ve}\nabla u_\ve \nabla \frac{\partial (w_{\ve,\mathbf{Q}}+\phi_{\ve,\mathbf{Q}})}{\partial Q_{ij}}|_{Q_i=Q_i^\ve}+u_\ve\frac{\partial (w_{\ve,\mathbf{Q}}+\phi_{\ve,\mathbf{Q}})}{\partial Q_{ij}}|_{Q_i=Q_i^\ve}\\
&&-u_\ve^p\frac{\partial (w_{\ve,\mathbf{Q}}+\phi_{\ve,\mathbf{Q}})}{\partial Q_{ij}}|_{Q_i=Q_i^\ve}=0,
\end{eqnarray*}
which gives
\begin{equation}\label{e601}
\sum_{i=1,\cdots,k, \ j=1,\cdots,n}c_{ij}\int_{\Omega_\ve} Z_{ij}\frac{\partial (w_{\ve,\mathbf{Q}}+\phi_{\ve,\mathbf{Q}})}{\partial Q_{sl}}|_{Q_s=Q_s^\ve}=0,
\end{equation}
for $s=1,\cdots,k, l=1,\cdots,n$.
We claim that (\ref{e601}) is a diagonally dominant system.  In fact, since $\int_{\Omega_\ve} \phi_{\ve,\mathbf{Q}}Z_{sl}dx=0$, we have that
\begin{equation*}
\int_{\Omega_\ve}Z_{sl}\frac{\partial \phi_{\ve,\mathbf{Q}}}{\partial Q_{ij}}|_{Q_i=Q_i^\ve}
=-\int_{\Omega_\ve}\phi_{\ve,\mathbf{Q}}\frac{\partial Z_{sl}}{\partial Q_{ij}}=0, \mbox{ if }s\neq i.
\end{equation*}
If $s=i$, we have
\begin{eqnarray*}
|\int_{\Omega_\ve}Z_{il}\frac{\partial \phi_{\ve,\mathbf{Q}}}{\partial Q_{ij}}|_{Q_i=Q_i^\ve}|
=|-\int_{\Omega_\ve}\phi_{\ve,\mathbf{Q}}\frac{\partial Z_{il}}{\partial Q_{ij}}|\\
\leq C\ve^{-1}\|\phi_{\ve,\mathbf{Q}}\|_*=O(\ve^{-1}e^{-\frac{\rho}{2}(1+\xi)}).
\end{eqnarray*}
For $s\neq i$, we have
\begin{equation*}
\int_{\Omega_\ve}Z_{sl}\frac{\partial w_{\ve,\mathbf{Q}}}{\partial Q_{ij}}=O(\ve^{-1}e^{-\frac{\eta|Q_i-Q_s|}{\ve}}).
\end{equation*}
For $s=i$, recall the definition of $Z_{ij}$, we have
\begin{equation}\label{diagonal}
\int_{\Omega_\ve}Z_{sl}\frac{\partial w_{\ve,\mathbf{Q}}}{\partial Q_{sj}}=
-\ve^{-1}\delta_{lj}\int_{\R^n}(\frac{\partial w}{\partial y_j})^2+O(\ve^{-1}e^{-\rho}).
\end{equation}
For each $(s,l)$, the off-diagonal term gives
\begin{eqnarray}\label{offdiagonal}
&&\sum_{s\neq i}\int_{\Omega_\ve}Z_{sl}\frac{\partial (w_{\ve,\mathbf{Q}}+\phi_{\ve,\mathbf{Q}}) }{\partial Q_{ij}}|_{Q_i=Q_i^\ve}+\sum_{s=i,l\neq j}\int_{\Omega_\ve}Z_{sl}\frac{\partial (w_{\ve,\mathbf{Q}}+\phi_{\ve,\mathbf{Q}})}{\partial Q_{sj} } |_{Q_i=Q_i^\ve}\nonumber\\
&&=\ve^{-1}(O(e^{-\eta\rho})+O(e^{-\frac{\rho}{2}})+O(e^{-\rho}))\\
&&=\ve^{-1}O(e^{-\eta\rho}),\nonumber
\end{eqnarray}
for some $\eta>0$.

So from (\ref{diagonal}) and (\ref{offdiagonal}), we can see that equation (\ref{e601}) becomes a system of homogeneous equations for $c_{sl}$, and the matrix of the system is nonsingular. So $c_{sl}=0$ for $s=1,\cdots,k, l=1,\cdots,n$. Hence $u_\ve=w_{\ve,\mathbf{Q}^\ve}+\phi_{\ve,\mathbf{Q}^\ve}$ is a solution of (\ref{pp}).

Similar to the argument in Section 6 of \cite{LNW}, one can get that $u_\ve>0$ and it has exactly $k$ local maximum points for $\ve$ small and $\rho $ large enough.

\end{document}